\newtheorem{theorem}{Theorem}[section]
\newtheorem{lemma}[theorem]{Lemma}
\theoremstyle{definition}
\newtheorem{example}[theorem]{Example}
\theoremstyle{remark}
\newtheorem{remark}[theorem]{Remark}
\numberwithin{equation}{section}
\newcommand{\Lt}{L_2(\Omega)}
\def\bz{\bm{\zeta}}
\def\asiph{a^{\text{sip}}_h}
\def\aarh{a^{\text{ar}}_h}
\def\bn{\mathbf{n}}
\def\Ho{H^1_0(\Omega)}
\def\O{\Omega}
\def\LT{{L_2(\O)}}
\def\cB{\mathcal{B}}
\def\cE{\mathcal{E}}
\def\cT{\mathcal{T}}
\def\HB{{H^1_\beta(\O)}}
\def\HBh{{H^1_\beta(\O;\mathcal{T}_h)}}
\def\HO{{H^1(\O)}}
\def\H2{{H^2(\O)}}
\def\ES{\Ho\times\Ho}
\def\FS{V_h\times V_h}
\def\FK{V_k\times V_k}
\def\FKm{V_{k-1}\times V_{k-1}}
\def\fC{\mathfrak{C}}
\def\fB{\mathfrak{B}}
\newcommand{\trinorm}[1]{%
  |\mkern-1.5mu|\mkern-1.5mu|
   #1
  |\mkern-1.5mu|\mkern-1.5mu|
}
\DeclareMathOperator*{\argmin}{argmin}
\begin{document}

\title[Robust DG-MG for An Elliptic Optimal Control Problem]{Robust Multigrid Methods for discontinuous Galerkin Discretizations of An Elliptic Optimal Control Problem}

\author{Sijing Liu}
\address{Department of Mathematics\\
University Of Connecticut\\
Storrs, CT\\
USA}
\email{sijing.liu@uconn.edu}

\keywords{elliptic distributed optimal control problems, general state equations, multigrid methods, discontinuous Galerkin methods}
\subjclass{49J20, 49M41, 65N30, 65N55}
\date{\today}

\begin{abstract}
   We consider discontinuous Galerkin methods for an elliptic distributed optimal control problem and we propose multigrid methods to solve the discretized system. We prove that the $W$-cycle algorithm is uniformly convergent in the energy norm and is robust with respect to a regularization parameter on convex domains. Numerical results are shown for both $W$-cycle and $V$-cycle algorithms.
\end{abstract}

\maketitle

\section{Introduction}

In this paper we consider the following elliptic optimal control problem.
Let $\Omega$ be a bounded convex polygonal domain in $\mathbb{R}^n$ ($n=2, 3$), $y_d\in \LT$ and $\beta$ be a positive constant. Find
\begin{equation}\label{optcon}
(\bar{y},\bar{u})=\argmin_{(y,u)\in K}\left [ \frac{1}{2}\|y-y_d\|^2_{\LT}+\frac{\beta}{2}\|u\|^2_{\LT}\right],
\end{equation} 
where $(y,u)$ belongs to $K\subset H^1_0(\Omega)\times \LT$ if and only if 
\begin{equation}\label{eq:stateeq}
a(y,v)=\int_{\Omega}uv \ dx \quad \forall v\in H^1_0(\Omega).
\end{equation}
Here the bilinear form $a(\cdot,\cdot)$ is defined as
\begin{equation}\label{eq:abilinear}
  a(y,v)=\int_{\Omega} \nabla y\cdot \nabla v\ dx+\int_{\Omega} (\bm{\zeta}\cdot\nabla y) v\ dx+\int_{\Omega} \gamma yv\ dx,
\end{equation}
where vector field $\bm{\zeta}\in [W^{1,\infty}(\Omega)]^2$ and the function $\gamma\in L_{\infty}(\Omega)$ is nonnegative. If $\bm{\zeta}\ne0$ then the constraint \eqref{eq:stateeq} is the weak form of a general second order PDE with an advective/convective term. We assume
\begin{equation}\label{eq:advassump}
    \gamma-\frac12\nabla\cdot\bz\ge\gamma_0>0
\end{equation}
such that the problem \eqref{eq:stateeq} is well-posed. 

\begin{remark}
    Throughout the paper we will follow the standard notation for differential operators, function spaces and norms that can be found for example in \cite{BS,Ciarlet}.
\end{remark}

It is well-known that \cite{Lions, Tro} the solution of \eqref{optcon}-\eqref{eq:stateeq} is characterized by
\begin{subequations}\label{eq:sp}
\begin{alignat}{3}
a(q,\bar{p})&=(\bar{y}-y_d,q)_\LT \quad &&\forall q\in H^1_0(\Omega),\\
\bar{p}+\beta\bar{u}&=0,\\
a(\bar{y},z)&=(\bar{u},z)_\LT  \quad &&\forall z\in H^1_0(\Omega),
\end{alignat}
\end{subequations}
where $\bar{p}$ is the adjoint state.
After eliminating $\bar{u}$, we arrive at the saddle point problem
\begin{subequations}\label{eq:osp}
\begin{alignat}{2}
a(q,\bar{p})-(\bar{y},q)_\LT&=-(y_d,q)_\LT \quad &&\forall q\in H^1_0(\Omega),\\
-(\bar{p},z)_\LT-\beta a(\bar{y},z)&=0  \quad &&\forall z \in H^1_0(\Omega).
\end{alignat}
\end{subequations}

Notice that $\beta$ is only in the second equation in \eqref{eq:osp}. In order to make the equations more balanced we perform a change of variables.
Let
\begin{equation}
\bar{p}=\beta^{\frac{1}{4}}\tilde{p}
\quad 
\text{and} 
\quad 
\bar{y}=\beta^{-\frac{1}{4}}\tilde{y}
\end{equation}
then the system \eqref{eq:osp} becomes 
\begin{subequations}\label{eq:sp1}
\begin{alignat}{2}
\beta^{\frac{1}{2}}a(q,\tilde{p})-(\tilde{y},q)_\LT&=-\beta^{\frac{1}{4}}(y_d,q)_\LT\quad &&\forall q\in H^1_0(\Omega),\\
-(\tilde{p},z)_\LT-\beta^{\frac{1}{2}}a(\tilde{y},z)&=0 \quad &&\forall z\in H^1_0(\Omega).
\end{alignat}
\end{subequations}

In this paper we employ a discontinuous Galerkin (DG) method \cite{arnold1982interior,riviere2008discontinuous,arnold2002unified} to discretize the saddle point problem $\eqref{eq:sp1}$. There are several advantages to use DG methods, for example, DG methods are more flexible regarding choices of meshes and more suitable for convection-dominated problems. It is well-known that DG methods can capture sharp gradients in the solutions such that spurious oscillations can be avoided. Our goal is to design multigrid methods for solving the discretized system resulting from the DG discretization that are robust with respect to the regularization parameter $\beta$. 

Multigrid methods for \eqref{eq:sp1} based on continuous Galerkin methods are intensively studied in the literature, for example, in \cite{Borzi,brenner2020multigrid,takacs2013convergence,Scho,taasan1991one} and the references therein. In \cite{brenner2020multigrid}, based on the approaches in \cite{BLS,BOS,brenner2017multigrid}, the authors developed multigrid methods for \eqref{eq:sp1} using a continuous $P_1$ finite element method. Besides the robustness of the multigrid methods with respect to $\beta$, the estimates in \cite{brenner2020multigrid} are established in a natural energy norm and the multigrid methods have a standard $\mathcal{O}(m^{-1})$ performance where $m$ is the number of smoothing steps. In this paper, we extend the results in \cite{brenner2020multigrid} to DG methods where the diffusion term in \eqref{eq:abilinear} is discretized by a symmetric interior penalty (SIP) method and the convection term \eqref{eq:abilinear} is discretized by an unstabilized\textbackslash centered-fluxes DG method \cite{di2010discrete,brezzi2004discontinuous}.
Multigrid methods based on DG methods are investigated in \cite{brenner2005convergence,gopalakrishnan2003multilevel,antonietti2015multigrid,brenner2009multigrid,brenner2011multigrid,hong2016robust,kanschat2015multigrid} and the references therein. However, not much work has been done towards the multigrid methods based on DG discretization for optimal control problems with provable results. The general idea in this paper is to construct a block-diagonal preconditioner and convert the saddle point problem \eqref{eq:sp1} into an equivalent symmetric positive definite (SPD) problem using the preconditioner. Therefore well-known multigrid theories for SPD system \cite{Hackbusch,bramble1993multigrid} can be utilized. The preconditioner requires solving a reaction-diffusion equation (approximately) based on a SIP discretization. This, however, does not affect the overall optimal computational complexity of our multigrid methods since the preconditioner itself can be constructed by multigrid methods (cf. \cite{brenner2005convergence}).

The rest of the paper is organized as follows. In Section \ref{sec:contprob}, we gather some known results regarding the continuous problem. In Section \ref{sec:dgfem}, we discretize the optimal control problem with a DG method and establish concrete error estimates. A crucial block-diagonal preconditioner is introduced and the 
multigrid methods for the discretized system are described in Section \ref{sec:multigrid}. In Sections \ref{sec:smapprox} and \ref{sec:convanalysis}, we establish the main theorem which is based on the smoothing and approximation properties of the multigrid methods. Finally, we provide some numerical results in Section \ref{sec:numerics} and end with some concluding remarks in Section \ref{sec:conclude}. Some technical proofs are provided in Appendices \ref{apdix:pfrh} and \ref{apdix:duality}.

Throughout this paper, we use $C$
 (with or without subscripts) to denote a generic positive
 constant that is independent of any mesh
 parameters and the regularization parameter $\beta$.
  Also to avoid the proliferation of constants, we use the
   notation $A\lesssim B$ (or $A\gtrsim B$) to
  represent $A\leq \text{(constant)}B$. The notation $A\approx B$ is equivalent to
  $A\lesssim B$ and $B\lesssim A$. Note that we do not consider convection-dominated case in this paper, hence the constants might depend on $\bm{\zeta}$ and $\gamma$.

\section{Continuous Problem}\label{sec:contprob}

We rewrite \eqref{eq:sp1} in a concise form
\begin{equation}\label{eq:conciseform}
\mathcal{B}((\tilde{p},\tilde{y}),(q,z))=-\beta^{\frac{1}{4}}(y_d,q)_\LT \quad \quad \forall (q,z)\in H^1_0(\Omega)\times H^1_0(\Omega),
\end{equation}
where
\begin{equation}\label{bilinear}
\mathcal{B}((p,y),(q,z))=\beta^{\frac{1}{2}}a(q,p)-(y,q)_\LT-(p,z)_\LT-\beta^{\frac{1}{2}}a(y,z).
\end{equation}

Let $\|p\|_{H^1_{\beta}(\Omega)}$ be defined by
\begin{equation}
\|p\|^2_{H^1_{\beta}(\Omega)}=\beta^{\frac{1}{2}}|p|^2_{H^1(\Omega)}+\|p\|^2_{\LT}.
\end{equation}
We have the following lemmas (cf. \cite{brenner2020multigrid}) regarding the bilinear form $\cB$ with respect to the norm $\|\cdot\|_\HB$.
\begin{lemma}\label{lem:stability}
We have
\begin{equation}\label{eq:cont}
\|p\|_{H^1_{\beta}(\Omega)}+\|y\|_{H^1_{\beta}(\Omega)}\approx\sup_{(q,z)\in H^1_0(\Omega)\times H^1_0(\Omega)}\frac{\mathcal{B}((p,y),(q,z))}{\|q\|_{H^1_{\beta}(\Omega)}+\|z\|_{H^1_{\beta}(\Omega)}}
\end{equation}
for any $(p,y)\in H^1_0(\Omega)\times H^1_0(\Omega).$
\end{lemma}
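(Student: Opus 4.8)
The plan is to establish the two directions of the norm equivalence \eqref{eq:cont} separately: the upper bound ($\lesssim$), which is the boundedness of $\cB$, and the lower bound ($\gtrsim$), which is the genuine inf--sup stability and is where the real work lies. For the upper bound I would simply apply the Cauchy--Schwarz inequality term by term in the definition \eqref{bilinear}. The two principal-part contributions $\beta^{1/2}\int_\O\nabla q\cdot\nabla p\,dx$ and $-\beta^{1/2}\int_\O\nabla y\cdot\nabla z\,dx$ are controlled by $\|p\|_\HB\|q\|_\HB$ and $\|y\|_\HB\|z\|_\HB$ after splitting $\beta^{1/2}=\beta^{1/4}\beta^{1/4}$; the mass terms $(y,q)_\LT$ and $(p,z)_\LT$ are bounded using $\|\cdot\|_\LT\le\|\cdot\|_\HB$; and the convection and reaction parts, using $\bz\in[W^{1,\infty}(\O)]^2$ and $\gamma\in\Linf$, produce the factors $\|\bz\|_{L_\infty(\O)}$ and $\|\gamma\|_{L_\infty(\O)}$ together with a harmless power of $\beta$ that is bounded for $\beta$ in a bounded range. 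Summing gives $\cB((p,y),(q,z))\lesssim(\|p\|_\HB+\|y\|_\HB)(\|q\|_\HB+\|z\|_\HB)$ with constant depending only on $\bz$ and $\gamma$, which yields the $\lesssim$ half.

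For the lower bound the strategy is to exhibit, for each fixed $(p,y)$, a single explicit test pair $(q,z)$ whose $\HB$-norm is controlled by that of $(p,y)$ and for which $\cB((p,y),(q,z))$ dominates $\|p\|_\HB^2+\|y\|_\HB^2$. The preliminary ingredient is the coercivity of $a$: for $v\in\Ho$, integrating the convection term by parts gives $\int_\O(\bz\cdot\nabla v)v\,dx=-\tfrac12\int_\O(\nabla\cdot\bz)v^2\,dx$, so that assumption \eqref{eq:advassump} yields
\begin{equation*}
a(v,v)=|v|_{H^1(\O)}^2+\int_\O\Big(\gamma-\tfrac12\nabla\cdot\bz\Big)v^2\,dx\ge|v|_{H^1(\O)}^2 .
\end{equation*}
I would then choose $(q,z)=(p-y,\,-p-y)$. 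Expanding \eqref{bilinear} and using bilinearity, the two off-diagonal mass contributions $\pm(p,y)_\LT$ cancel, and---crucially---the cross terms $\mp\beta^{1/2}a(y,p)$ cancel as well because they enter \eqref{bilinear} with opposite signs; what survives is exactly
\begin{equation*}
\cB((p,y),(p-y,-p-y))=\beta^{1/2}a(p,p)+\beta^{1/2}a(y,y)+\|p\|_\LT^2+\|y\|_\LT^2\ge\|p\|_\HB^2+\|y\|_\HB^2,
\end{equation*}
where the last inequality uses the coercivity above together with the definition of $\|\cdot\|_\HB$.

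To conclude, I would bound the test norm by the triangle inequality, $\|p-y\|_\HB+\|{-p-y}\|_\HB\le 2(\|p\|_\HB+\|y\|_\HB)$, and combine it with the elementary estimate $\|p\|_\HB^2+\|y\|_\HB^2\ge\tfrac12(\|p\|_\HB+\|y\|_\HB)^2$ to obtain
\begin{equation*}
\sup_{(q,z)}\frac{\cB((p,y),(q,z))}{\|q\|_\HB+\|z\|_\HB}\ge\frac{\|p\|_\HB^2+\|y\|_\HB^2}{2(\|p\|_\HB+\|y\|_\HB)}\ge\tfrac14\big(\|p\|_\HB+\|y\|_\HB\big),
\end{equation*}
which is the $\gtrsim$ half and completes the equivalence.

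The main obstacle is identifying the correct test pair in the lower bound: a naive diagonal choice such as $(p,-y)$ only recovers the $\beta^{1/2}|\cdot|_{H^1(\O)}^2$ part of the norm and leaves the $L_2$ part uncontrolled, whereas the purely off-diagonal choice $(-y,-p)$ recovers only the $L_2$ part. The nontrivial observation is that the particular skew coupling in \eqref{bilinear} makes the combined choice $(p-y,-p-y)$ produce both parts simultaneously, with the nonsymmetric convection contribution cancelling automatically; thus the advection field $\bz$, which might otherwise threaten the stability estimate, enters only through the coercivity of $a$ and is harmless here.
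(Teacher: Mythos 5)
Your proof is correct and follows essentially the same route the paper takes: the paper defers the continuous lemma to \cite{brenner2020multigrid} but carries out exactly this argument for the discrete analogue in \eqref{eq:bhcoer1}--\eqref{eq:bhwposed}, using the same test pair $(p-y,-p-y)$ so that the skew terms cancel and coercivity of $a$ plus the $L_2$ terms recover the full $\HB$-norm. The only cosmetic difference is that you bound the test norm by the triangle inequality where the paper invokes the parallelogram law, and your observation that the advection/reaction terms in the continuity bound carry nonnegative powers of $\beta$ (harmless for $\beta$ bounded above) is the right way to keep the constants $\beta$-independent.
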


\begin{remark}
Lemma \ref{lem:stability} guarantee the well-posedness of \eqref{eq:sp1} by the standard theory \cite{Bab,Brezzi}.
\end{remark}

\begin{remark}
    We also have the similar stability estimate
    \begin{equation}\label{eq:contdual}
\|p\|_{H^1_{\beta}(\Omega)}+\|y\|_{H^1_{\beta}(\Omega)}\approx\sup_{(q,z)\in H^1_0(\Omega)\times H^1_0(\Omega)}\frac{\mathcal{B}((q,z),(p,y))}{\|q\|_{H^1_{\beta}(\Omega)}+\|z\|_{H^1_{\beta}(\Omega)}}
\end{equation}
for any $(p,y)\in H^1_0(\Omega)\times H^1_0(\Omega).$
\end{remark}

We also need the following regularity results (cf. \cite{brenner2020multigrid}) on convex domains.
Let $(p,y)\in H^1_0(\Omega)\times H^1_0(\Omega)$ satisfies
\begin{equation}\label{eq:regu}
\mathcal{B}((p,y),(q,z))=(f,q)_\LT+(g,z)_\LT\quad \quad \forall (q,z)\in H^1_0(\Omega)\times H^1_0(\Omega),
\end{equation}
where $(f,g)\in \LT\times \LT$ and $\mathcal{B}$ is defined in \eqref{bilinear}.

\begin{lemma}\label{h2}
The solution $(p,y)$ of \eqref{eq:regu} belongs to $H^2(\O)\times H^2(\O)$ and we have
\begin{equation}
\|\beta^{\frac{1}{2}}p\|_{H^2(\Omega)}+\|\beta^{\frac{1}{2}}y\|_{H^2(\Omega)}\le C_{\Omega}(\|f\|_{\LT}+\|g\|_{\LT}).
\end{equation}
\end{lemma}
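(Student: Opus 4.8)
The plan is to decouple the coupled system \eqref{eq:regu} into two scalar second-order elliptic boundary value problems, to extract an a priori $H^1_\beta$ bound from the inf--sup stability of Lemma \ref{lem:stability}, and then to invoke the standard $H^2$-regularity (shift) theory for second-order elliptic operators on convex polygonal domains, arranging the $\beta^{\frac12}$ factor so that the elliptic operators themselves carry no $\beta$-dependence.

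First I would test \eqref{eq:regu} against $(q,0)$ and against $(0,z)$ separately. Using \eqref{bilinear} this splits the problem into
\[
\beta^{\frac12} a(q,p) = (y+f,q)_\LT, \qquad \beta^{\frac12} a(y,z) = -(p+g,z)_\LT \qquad \forall q,z\in\Ho .
\]
Integrating by parts (recall $\bz\in[W^{1,\infty}(\O)]^2$ and $\gamma\in\Linf$), the first identity is the weak form of the adjoint equation $-\Delta p-\bz\cdot\nabla p-(\nabla\cdot\bz)p+\gamma p=\beta^{-\frac12}(y+f)$ with $p\in\Ho$, and the second is the weak form of $-\Delta y+\bz\cdot\nabla y+\gamma y=-\beta^{-\frac12}(p+g)$ with $y\in\Ho$. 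Both are genuine second-order elliptic operators with principal part $-\Delta$, and their coercivity on $\Ho$ is supplied by \eqref{eq:advassump}.

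Next I would record the $H^1_\beta$ a priori estimate. Applying Lemma \ref{lem:stability} to the solution of \eqref{eq:regu} and using $\|q\|_\LT\le\|q\|_\HB$ gives
\[
\|p\|_\HB+\|y\|_\HB \lesssim \sup_{(q,z)}\frac{(f,q)_\LT+(g,z)_\LT}{\|q\|_\HB+\|z\|_\HB}\le\|f\|_\LT+\|g\|_\LT,
\]
and in particular $\|p\|_\LT+\|y\|_\LT\lesssim\|f\|_\LT+\|g\|_\LT$. I would then apply the convex-domain $H^2$ shift theorem to each scalar equation. Since neither elliptic operator depends on $\beta$, the regularity constants depend only on $\O,\bz,\gamma$, and multiplying the resulting estimates by $\beta^{\frac12}$ transfers the factor onto the right-hand data, yielding $\|\beta^{\frac12}y\|_\H2\lesssim\|p\|_\LT+\|g\|_\LT$ and $\|\beta^{\frac12}p\|_\H2\lesssim\|y\|_\LT+\|f\|_\LT$. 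Summing and inserting the $L_2$ bounds from the previous step gives the claimed inequality.

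The main obstacle is the $\beta$-independence of $C_\O$. A naive route that uses only the pure Laplacian estimate $\|y\|_\H2\le C_\O\|\Delta y\|_\LT$ and treats $\beta^{\frac12}(\bz\cdot\nabla y+\gamma y)$ as lower-order data produces terms such as $\beta^{\frac12}\|y\|_\LT$ that are not uniformly controlled as $\beta\to\infty$. Decoupling followed by rescaling circumvents this, because the factor $\beta^{\frac12}$ is attached to the solution rather than to the operator; the price is that I must use the \emph{full-operator} $H^2$ shift estimate, valid on convex polygonal domains precisely because $\bz\in[W^{1,\infty}(\O)]^2$, $\gamma\in\Linf$, and the operator is coercive. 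If one prefers to reduce to the Laplacian estimate, the first-order term is absorbed through the interpolation inequality $|y|_\HO\le\varepsilon\|y\|_\H2+C_\varepsilon\|y\|_\LT$ and the remaining $\|y\|_\HO$ is controlled by coercivity, again with constants independent of $\beta$.
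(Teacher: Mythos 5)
Your argument is correct: testing against $(q,0)$ and $(0,z)$ decouples \eqref{eq:regu} into two coercive scalar elliptic problems with right-hand sides $\beta^{-\frac12}(y+f)$ and $-\beta^{-\frac12}(p+g)$, the $L_2$ bounds on $p,y$ follow from Lemma \ref{lem:stability}, and the convex-domain shift theorem (whose constant is $\beta$-independent because the operators carry no $\beta$) then gives the stated estimate after multiplying by $\beta^{\frac12}$. The paper itself does not prove this lemma but defers to \cite{brenner2020multigrid}, where essentially this same decoupling-and-rescaling argument is used, so your proposal matches the intended proof.
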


\begin{remark}
    Lemma \ref{h2} is also valid for the following dual problem. Find $(p,y)\in H^1_0(\Omega)\times H^1_0(\Omega)$ such that
\begin{equation}\label{eq:regudual}
\mathcal{B}((q,z),(p,y))=(f,q)_\LT+(g,z)_\LT\quad \quad \forall (q,z)\in H^1_0(\Omega)\times H^1_0(\Omega).
\end{equation}
\end{remark}

\section{Discrete Problem}\label{sec:dgfem}

In this section we discretize the saddle point problem \eqref{eq:sp1} by a DG method \cite{arnold2002unified,arnold1982interior,brezzi2004discontinuous}. Let $\mathcal{T}_h$ be a shape regular simplicial triangulation of $\Omega$.
The diameter of $T\in\mathcal{T}_h$ is denoted by $h_T$ and $h=\max_{T\in\mathcal{T}_h}h_T$ is the mesh diameter. 
Let $\mathcal{E}_h=\mathcal{E}^b_h\cup\mathcal{E}^i_h$ where $\cE^i_h$ (resp. $\cE^b_h$) represents the set of interior edges (resp. boundary edges).

We further decompose the boundary edges $\cE^b_h$ into the inflow part $\cE^{b,-}_h$ and the outflow part $\cE^{b,+}_h$ which are defined as follows,
\begin{align}
    \cE^{b,-}_h&=\{e\in\cE^b_h: e\subset\{x\in\partial\O: \bz(x)\cdot\mathbf{n}(x)<0\}\},\\
    \cE^{b,+}_h&=\cE^b_h\setminus\cE^{b,-}_h.
\end{align}

For an edge $e\in \mathcal{E}^i_h$, let $h_e$ be the length of $e$. For each edge we associate a fixed unit normal $\mathbf{n}$. We denote by $T^+$ the element for which $\mathbf{n}$ is the outward normal, and $T^-$ the element for which $-\mathbf{n}$ is the outward normal. We define the discontinuous finite element space $V_h$ as 
\begin{equation}
    V_h=\{v\in\LT:v|_T\in\mathbb{P}_1(T)\quad\forall\ T\in\mathcal{T}_h\}.
\end{equation}
For $v\in V_h$ on an edge $e$, we define
\begin{equation}
    v^+=v|_{T^+}\quad\text{and}\quad v^-=v|_{T^-}.
\end{equation}
We define the jump and average for $v\in V_h$ on an edge $e$ as follows,
\begin{equation}
    [v]=v^+-v^-,\quad \{v\}=\frac{v^++v^-}{2}.
\end{equation}
For $e\in\mathcal{E}_h^b$ with $e\in\partial T$, we let
\begin{equation}
    [v]=\{v\}=v|_T.
\end{equation}
We also denote 
\begin{equation}
    (w,v)_e:=\int_e wv\ \!ds\quad\text{and}\quad(w,v)_T:=\int_T wv\ \!dx.
\end{equation}

\subsection{Discontinuous Galerkin methods}
The DG methods for \eqref{eq:conciseform} is to find $(\tilde{p}_h,\tilde{y}_h)\in V_h\times V_h$ such that
\begin{equation}\label{eq:dg}
\mathcal{B}_h((\tilde{p}_h,\tilde{y}_h),(q,z))=-\beta^{\frac{1}{4}}(y_d,q)_\LT \quad \quad \forall (q,z)\in V_h\times V_h,
\end{equation}
where
\begin{equation}\label{eq:dgbilinear}
\mathcal{B}_h((p,y),(q,z))=\beta^{\frac{1}{2}}a_h(q,p)-(y,q)_\LT-(p,z)_\LT-\beta^{\frac{1}{2}}a_h(y,z).
\end{equation}
The bilinear form $a_h(\cdot,\cdot)$ is defined by
\begin{equation}\label{eq:ahdef}
    a_h(u,v)=a_h^{\text{sip}}(u,v)+a^{\text{ar}}_h(u,v)\quad\forall u,v\in V_h,
\end{equation}
where
\begin{equation}\label{eq:dgbilinearsip}
\begin{aligned}
    a^{\text{sip}}_h(u,v)=&\sum_{T\in\mathcal{T}_h}(\nabla u, \nabla v)_T-\sum_{e\in\mathcal{E}_h}(\{\mathbf{n}\cdot\nabla u\},[v])_e
    -\sum_{e\in\mathcal{E}_h}(\{\mathbf{n}\cdot\nabla v\},[u])_e\\
    &+\sigma\sum_{e\in\mathcal{E}_h} h_e^{-1}([u],[v])_e
\end{aligned}
\end{equation}
is the bilinear form of the SIP method with sufficiently large penalty parameter $\sigma$ and the unstabilized DG scheme (cf. \cite{brezzi2004discontinuous,di2011mathematical}) for the advection-reaction term is defined as
\begin{equation}\label{eq:ardef}
\begin{aligned}
    a^{\text{ar}}_h(u,v)=\sum_{T\in\mathcal{T}_h}(\bm{\zeta}\cdot\nabla u+\gamma u, v)_T
     -\sum_{e\in\cE^i_h\cup\cE^{b,-}_h}(\bn\cdot\bm{\zeta}[u],\{v\})_e.
\end{aligned}
\end{equation}

\begin{remark}
   We do not consider convection-dominated case in this paper. Therefore the bilinear form $\aarh(\cdot,\cdot)$ does not contain any upwind stabilization terms. However, if we consider convection-dominated case, the well-known upwind schemes \cite{leykekhman2012local,ayuso2009discontinuous,brezzi2004discontinuous} could be utilized.
\end{remark}

It is also necessary to consider the general problem \eqref{eq:regu} and the dual problem \eqref{eq:regudual}. The DG methods for \eqref{eq:regu} is to find $(p_h,y_h)\in V_h\times V_h$ such that
\begin{equation}\label{eq:dgprob}
\mathcal{B}_h((p_h,y_h),(q,z))=(f,q)_\LT+(g,z)_\LT\quad \quad \forall (q,z)\in V_h\times V_h.
\end{equation}
Similarly, the DG methods for \eqref{eq:regudual} is to find $(p_h,y_h)\in V_h\times V_h$ such that
\begin{equation}\label{eq:dgprobdual}
\mathcal{B}_h((q,z),(p_h,y_h))=(f,q)_\LT+(g,z)_\LT\quad \quad \forall (q,z)\in V_h\times V_h.
\end{equation}

Let the norm $\|\cdot\|_{\HBh}$ be defined as
\begin{equation}\label{eq:hbhnorm}
  \|p\|^2_{\HBh}=\beta^{\frac{1}{2}}\|p\|^2_{1,h}+\|p\|^2_{\LT},
\end{equation}
where 
\begin{equation}\label{eq:hbhnorm1}
  \|p\|_{1,h}^2=\sum_{T\in\mathcal{T}_h}\|\nabla p\|^2_{L_2(T)}+\sum_{e\in\mathcal{E}_h}\frac{1}{h_e}\|[p]\|^2_{L_2(e)}+\sum_{e\in\mathcal{E}_h}h_e\|\{\mathbf{n}\cdot\nabla p\}\|_{L_2(e)}^2.
\end{equation}

Let $V=\Ho\cap H^2(\O)$. For the bilinear form $a_h(\cdot,\cdot)$, we have
        \begin{alignat}{3}
            a_h(w,v)&\le C\|w\|_{1,h}\|v\|_{1,h}&&\quad \forall w,v\in V+V_h,\label{eq:ahcont}\\  
            a_h(v,v)&\ge C\|v\|^2_{1,h}&&\quad\forall w,v\in V_h,\label{eq:ahcoer}
        \end{alignat}
for sufficiently large $\sigma$. A proof is provided in Appendix \ref{apdix:pfrh}. Note that the constants in \eqref{eq:ahcont}-\eqref{eq:ahcoer} might depend on $\bm{\zeta}$ and $\gamma$.

It follows from \eqref{eq:dgbilinear}, \eqref{eq:hbhnorm}, \eqref{eq:ahcont} and Cauchy-Schwarz inequality that
\begin{equation}\label{eq:bhcont}
\mathcal{B}_h((p,y),(q,z))\le (\|p\|^2_{\HBh}+\|y\|^2_{\HBh})^{\frac{1}{2}}(\|q\|^2_{\HBh}+\|z\|^2_{\HBh})^{\frac{1}{2}}
\end{equation}
for any $(p,y), (q,z)\in (V+V_h)\times (V+V_h).$

We also have, by \eqref{eq:dgbilinear}, \eqref{eq:ahcoer} and a direct calculation,
\begin{equation}\label{eq:bhcoer1}
\begin{aligned}
  \mathcal{B}_h&((p,y),(p-y,-y-p))\\
  &=\beta^\frac12 a_h(p,p)+(p,p)_\LT+\beta^\frac12 a_h(y,y)+(y,y)_\LT\\
  &\gtrsim\|p\|^2_{\HBh}+\|y\|^2_{\HBh}
\end{aligned}
\end{equation}
and
\begin{equation}\label{eq:bhcoer2}
  \|p-y\|^2_{\HBh}+\|-y-p\|^2_{\HBh}=2(\|p\|^2_{\HBh}+\|y\|^2_{\HBh})
\end{equation}
by the parallelogram law.
It follows from \eqref{eq:bhcont}-\eqref{eq:bhcoer2} that
\begin{equation}\label{eq:bhwposed}
\begin{aligned}
  \|p_h\|_{\HBh}&+\|y_h\|_{\HBh}\\
  &\approx\sup_{(q,z)\in V_h\times V_h}\frac{\mathcal{B}_h((p_h,y_h),(q,z))}{\|q\|_{\HBh}+\|z\|_{\HBh}}\quad\forall(p_h,y_h)\in V_h\times V_h.
  \end{aligned}
\end{equation}
Similarly, we have 
\begin{equation}\label{eq:bhwposeddual}
\begin{aligned}
  \|p_h\|_{\HBh}&+\|y_h\|_{\HBh}\\
  &\approx\sup_{(q,z)\in V_h\times V_h}\frac{\mathcal{B}_h((q,z),(p_h,y_h))}{\|q\|_{\HBh}+\|z\|_{\HBh}}\quad\forall(p_h,y_h)\in V_h\times V_h.
  \end{aligned}
\end{equation}
It follows immediately from $\eqref{eq:bhwposed}$ and $\eqref{eq:bhwposeddual}$ that the discrete problems \eqref{eq:dgprob} and \eqref{eq:dgprobdual} are uniquely solvable. We also need the following lemma which follows from a standard inverse estimate (cf. \cite{BS}) and trace inequalities (cf. \cite[Proposition 3.1]{ciarlet2013analysis}),

\begin{lemma}
    Assume $\mathcal{T}_h$ is a quasi-uniform triangulation of $\O$, we have the following inverse estimate,
    \begin{equation}\label{eq:dginverse}
        \|v\|_{1,h}\lesssim h^{-1}\|v\|_\LT\quad\forall v\in V_h.
    \end{equation}
\end{lemma}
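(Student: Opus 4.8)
The plan is to prove the inverse estimate \eqref{eq:dginverse} by estimating each of the three terms in the definition \eqref{eq:hbhnorm1} of $\|v\|_{1,h}^2$ separately and showing that each is bounded by $C h^{-2}\|v\|_\LT^2$. Since $\mathcal{T}_h$ is quasi-uniform, we have $h_T \approx h$ for every $T\in\mathcal{T}_h$ and $h_e \approx h$ for every edge $e$, so all the mesh-dependent weights can be replaced by powers of the global parameter $h$ up to constants.

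First I would handle the volume term $\sum_{T}\|\nabla v\|_{L_2(T)}^2$. On each element $T$, since $v|_T\in\mathbb{P}_1(T)$ lives in a finite-dimensional space, the standard inverse inequality (cf. \cite{BS}) gives $\|\nabla v\|_{L_2(T)}\lesssim h_T^{-1}\|v\|_{L_2(T)}$; summing over all $T$ and using quasi-uniformity yields the bound $h^{-2}\|v\|_\LT^2$. Next, for the jump term $\sum_e h_e^{-1}\|[v]\|_{L_2(e)}^2$, I would first apply a trace inequality (cf. \cite[Proposition 3.1]{ciarlet2013analysis}) of the form $\|w\|_{L_2(e)}^2\lesssim h_T^{-1}\|w\|_{L_2(T)}^2 + h_T\|\nabla w\|_{L_2(T)}^2$ to each one-sided trace $v^{\pm}$, then absorb the gradient contribution using the element-wise inverse inequality already established. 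This produces a bound of the form $h_e^{-1}(h_T^{-1}+h_T h_T^{-2})\|v\|_{L_2(T)}^2\lesssim h^{-2}\|v\|_{L_2(T)}^2$ after using $h_e\approx h_T\approx h$. The third term $\sum_e h_e\|\{\mathbf{n}\cdot\nabla v\}\|_{L_2(e)}^2$ is treated analogously: a trace inequality bounds the edge norm of $\nabla v$ by element norms of $\nabla v$ and $\nabla^2 v$, but since $v$ is piecewise linear the second derivatives vanish, so we get $\|\mathbf{n}\cdot\nabla v\|_{L_2(e)}^2\lesssim h_T^{-1}\|\nabla v\|_{L_2(T)}^2$, and then the weight $h_e$ combined with the element-wise inverse inequality gives $h_e h_T^{-1}\cdot h_T^{-2}\|v\|_{L_2(T)}^2\lesssim h^{-2}\|v\|_{L_2(T)}^2$.

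Combining the three estimates and summing over all elements and edges (noting that each edge borders at most two elements, so the edge-to-element sums can be controlled by a bounded-overlap argument) gives $\|v\|_{1,h}^2\lesssim h^{-2}\|v\|_\LT^2$, and taking square roots yields \eqref{eq:dginverse}.

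I expect the main technical obstacle to be the bookkeeping in the edge terms: one must correctly pass from edge-based sums to element-based sums, keeping track of the trace inequality constants and verifying that the powers of $h_e$ and $h_T$ combine to exactly $h^{-2}$. In particular the jump term is the most delicate because the factor $h_e^{-1}$ is already singular in $h$, so the trace inequality must not introduce an additional unfavorable negative power; quasi-uniformity is essential here to ensure $h_e^{-1}\approx h_T^{-1}\approx h^{-1}$ uniformly. Everything else reduces to the scalar finite-dimensional inverse inequality on $\mathbb{P}_1$, which is standard.
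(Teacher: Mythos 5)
Your proposal is correct and follows exactly the route the paper intends: the paper gives no written proof, stating only that the lemma ``follows from a standard inverse estimate (cf.\ \cite{BS}) and trace inequalities (cf.\ \cite[Proposition 3.1]{ciarlet2013analysis})'', which is precisely the term-by-term argument you flesh out. Your bookkeeping of the powers of $h_e$ and $h_T$ under quasi-uniformity is sound, so there is nothing to add.
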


\subsection{Interpolation operator $\Pi_h$}
We use the usual continuous nodal interpolant (which belongs to $V_h$) \cite{arnold2002unified,riviere2008discontinuous,BS} such that the following estimates hold.

\begin{lemma}\label{lem:interpo}
We have 
\begin{equation}\label{eq:interpolation}
  \|z-\Pi_hz\|_{\Lt}+h|z-\Pi_hz|_{\HO}\lesssim h^2|z|_{H^2(\O)}\quad\forall z\in H^2(\O)\cap\Ho.
\end{equation}
\end{lemma}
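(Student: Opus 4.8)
The plan is to reduce this global estimate to element-wise estimates, prove those on a fixed reference simplex via the Bramble--Hilbert lemma, and then transfer back by an affine scaling argument (cf. \cite{BS,Ciarlet}). First note that $\Pi_h z$ is well defined on $\H2\cap\Ho$: for $n=2,3$ the Sobolev embedding $H^2(\O)\hookrightarrow C(\bar\O)$ guarantees that the nodal values are meaningful, and the continuous nodal interpolant $\Pi_h z\in V_h$ is in fact globally continuous, so that $|z-\Pi_h z|_{\HO}$ is a genuine $H^1$ seminorm over $\O$.

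Because $z\in\H2$ and $\Pi_h z|_T$ depends only on $z|_T$, it suffices to prove, for each $T\in\cT_h$ and $m=0,1$,
\[
  |z-\Pi_h z|_{H^m(T)}\lesssim h_T^{2-m}|z|_{H^2(T)},
\]
and then square, sum over $T$, and use $\sum_{T\in\cT_h}|z|_{H^2(T)}^2=|z|_{H^2(\O)}^2$ together with $h_T\le h$. To establish the local estimate I would fix the reference simplex $\hat T$ with affine map $F_T(\hat x)=B_T\hat x+b_T$ sending $\hat T$ onto $T$, and set $\hat z=z\circ F_T$ with $\hat\Pi$ the nodal interpolant on $\hat T$. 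The key point is that $\hat\Pi$ reproduces $\mathbb{P}_1(\hat T)$ and is a bounded linear operator from $H^2(\hat T)$ into $H^m(\hat T)$ (boundedness again using $H^2(\hat T)\hookrightarrow C(\hat T)$ for $n\le 3$). Hence the operator $I-\hat\Pi$ annihilates linear polynomials, and the Bramble--Hilbert lemma yields $|\hat z-\hat\Pi\hat z|_{H^m(\hat T)}\lesssim|\hat z|_{H^2(\hat T)}$. Transferring this inequality back to $T$ with the standard scaling bounds $\|B_T\|\lesssim h_T$, $\|B_T^{-1}\|\lesssim h_T^{-1}$ and $|\det B_T|\approx h_T^n$, all following from shape regularity, produces exactly the factor $h_T^{2-m}$ and completes the local estimate.

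The argument is entirely standard, so the principal difficulty is bookkeeping rather than any genuine obstacle: one must check that the Jacobian scaling survives shape regularity uniformly, so that the hidden constants are independent of $T$, and that the nodal interpolant remains admissible in the three-dimensional case, which once more reduces to the embedding $H^2\hookrightarrow C^0$. I would also remark that, being purely local and involving a single component, the proof requires no interaction between the two copies of $V_h$ in the product space $\FS$, so the estimate is simply applied componentwise in the subsequent error analysis.
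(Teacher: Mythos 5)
Your argument is correct and is precisely the standard Bramble--Hilbert plus affine-scaling proof found in the references the paper cites (e.g.\ \cite{BS,Ciarlet}); the paper itself offers no proof of Lemma \ref{lem:interpo}, treating it as a known property of the continuous nodal interpolant. The only point worth flagging is already handled in your write-up: the Sobolev embedding $H^2(\O)\hookrightarrow C(\bar\O)$ for $n=2,3$ is what makes the nodal values, and hence $\Pi_h$, well defined.
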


\begin{remark}\label{remark:inter1h}
    We also have (cf. \cite{BS})
    \begin{equation}\label{eq:inter1h}
        \|z-\Pi_hz\|_{1,h}\lesssim h|z|_{H^2(\O)}\quad\forall z\in H^2(\O)\cap\Ho.
    \end{equation}
\end{remark}

\subsection{Error Estimates}

It is well-known \cite{arnold2002unified,riviere2008discontinuous,brezzi2004discontinuous} that the DG method \eqref{eq:dg} is consistent. Hence, we have the following Galerkin orthogonality
\begin{equation}\label{eq:galortho}
  \cB_h((p-p_h,y-y_h),(q,z))=0\quad\forall(q,z)\in\FS.
\end{equation}

\begin{lemma}\label{lem:derr}
Let the functions $(p,y)$ (resp,. $(p_h,y_h)$) be the solutions of \eqref{eq:regu} or \eqref{eq:regudual} (resp,. \eqref{eq:dgprob} or \eqref{eq:dgprobdual}), we have
\begin{equation}\label{eq:hb1esti}
\begin{aligned}
    \|p-p_h\|_{\HBh}&+\|y-y_h\|_{\HBh}\\
    &\le C_{\Omega}({\beta}^{\frac{1}{2}}h^{-2}+1)^{\frac{1}{2}}\beta^{-\frac{1}{2}}h^2(\|f\|_{\LT}+\|g\|_{\LT}),
\end{aligned}
\end{equation}
\begin{equation}\label{eq:l2esti}
    \begin{aligned}
        \|p-p_h\|_{\LT}&+\|y-y_h\|_{\LT}\\
        &\le C_{\Omega}({\beta}^{\frac{1}{2}}h^{-2}+1)\beta^{-1}h^4(\|f\|_{\LT}+\|g\|_{\LT}).
    \end{aligned}
\end{equation}
\end{lemma}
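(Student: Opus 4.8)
The plan is to establish the energy-norm bound \eqref{eq:hb1esti} by a quasi-optimality (C\'ea-type) argument and then to derive the $L_2$ bound \eqref{eq:l2esti} by an Aubin--Nitsche duality argument; in both cases the ingredients are the discrete stability, the Galerkin orthogonality, the interpolation estimates, and the $H^2$-regularity of Lemma \ref{h2}.

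For the energy estimate, write $e_p=p-p_h$ and $e_y=y-y_h$ and split each error through the nodal interpolant, e.g. $\Pi_h p-p_h=e_p-(p-\Pi_h p)$. Applying the discrete stability \eqref{eq:bhwposed} to $(\Pi_h p-p_h,\Pi_h y-y_h)\in\FS$ and invoking the Galerkin orthogonality \eqref{eq:galortho} to annihilate the $(e_p,e_y)$ contribution in the numerator, I am left with $\mathcal{B}_h((p-\Pi_h p,y-\Pi_h y),(q,z))$, which the continuity \eqref{eq:bhcont} bounds by the interpolation error. A triangle inequality then yields the quasi-optimal bound
\begin{equation*}
\|e_p\|_{\HBh}+\|e_y\|_{\HBh}\lesssim\|p-\Pi_h p\|_{\HBh}+\|y-\Pi_h y\|_{\HBh}.
\end{equation*}
By the definition \eqref{eq:hbhnorm} of the norm together with Lemma \ref{lem:interpo} and \eqref{eq:inter1h}, the right-hand side is controlled by $(\beta^{\frac12}h^2+h^4)^{\frac12}(|p|_{\H2}+|y|_{\H2})$, and Lemma \ref{h2} replaces $|p|_{\H2}+|y|_{\H2}$ by $\beta^{-\frac12}C_\Omega(\|f\|_\LT+\|g\|_\LT)$. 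A brief rearrangement, using $(\beta^{\frac12}h^{-2}+1)^{\frac12}=h^{-1}(\beta^{\frac12}+h^2)^{\frac12}$, recovers exactly \eqref{eq:hb1esti}.

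For the $L_2$ estimate I would set up the dual problem \eqref{eq:regudual} with data $(e_p,e_y)$ and let $(\phi,\psi)\in\Ho\times\Ho$ be its solution; by the dual version of Lemma \ref{h2} one has $|\phi|_{\H2}+|\psi|_{\H2}\lesssim\beta^{-\frac12}C_\Omega(\|e_p\|_\LT+\|e_y\|_\LT)$. Testing the dual problem with $(e_p,e_y)$ gives $\|e_p\|^2_\LT+\|e_y\|^2_\LT=\mathcal{B}_h((e_p,e_y),(\phi,\psi))$, where consistency of the DG scheme is what allows the continuous dual identity to be read off the augmented form $\mathcal{B}_h$ on $(V+V_h)\times(V+V_h)$. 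Subtracting $(\Pi_h\phi,\Pi_h\psi)$ via Galerkin orthogonality and applying continuity \eqref{eq:bhcont}, I bound the right-hand side by the product of the energy error and the dual interpolation error; the latter obeys the same estimate as in the primal case but with $(\|e_p\|_\LT+\|e_y\|_\LT)$ in place of the data. Cancelling one power of $(\|e_p\|_\LT+\|e_y\|_\LT)$ and inserting the energy bound \eqref{eq:hb1esti} produces \eqref{eq:l2esti}, the two $(\beta^{\frac12}+h^2)^{\frac12}$ factors from primal and dual interpolation combining to give $h^2(\beta^{\frac12}+h^2)\beta^{-1}$.

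The main obstacle is the careful use of DG consistency in the duality step: since $e_p,e_y\notin\Ho$, the identity $\|e_p\|^2_\LT+\|e_y\|^2_\LT=\mathcal{B}_h((e_p,e_y),(\phi,\psi))$ must be justified through the consistency of both the SIP form $\asiph$ and the centered-flux form $\aarh$, exploiting that the dual solution $(\phi,\psi)$ is continuous so that all jump terms involving $\phi,\psi$ vanish on interior edges. Tracking the $\beta$-dependence uniformly through the regularity constants is the other point requiring attention, but the algebra organizes cleanly once the primal and dual interpolation factors are multiplied together.
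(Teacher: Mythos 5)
Your proposal is correct and follows essentially the same route as the paper: quasi-optimality from the discrete inf-sup stability \eqref{eq:bhwposed} plus Galerkin orthogonality, combined with the interpolation estimates and Lemma \ref{h2} for the energy bound, and an Aubin--Nitsche duality argument for the $L_2$ bound. The consistency identity $\|e_p\|^2_\LT+\|e_y\|^2_\LT=\mathcal{B}_h((e_p,e_y),(\xi,\theta))$ that you correctly flag as the delicate step is exactly what the paper isolates and proves separately in Appendix \ref{apdix:duality}.
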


\begin{proof}
We only establish the estimates involving \eqref{eq:regu} and \eqref{eq:dgprob}. 
Using the estimate \eqref{eq:bhwposed} and the relation \eqref{eq:galortho}, we have
\begin{equation}
\|p-p_h\|_{\HBh}+\|y-y_h\|_{\HBh}\lesssim\inf_{(q,z)\in V_h\times V_h}(\|p-q\|_{\HBh}+\|y-z\|_{\HBh}).
\end{equation}
By Lemma \ref{lem:interpo} and Lemma \ref{h2}, we have
\begin{equation}
\beta^{\frac{1}{2}}\|p-\Pi_hp\|^2_{1,h}\le C\beta^{\frac{1}{2}}h^2\|p\|_{H^2(\Omega)}^2\le \frac{C}{\beta^{\frac{1}{2}}}h^2(\|f\|_{\LT}+\|g\|_{\LT})^2
\end{equation}
 and 
 \begin{equation}
\|p-\Pi_hp\|^2_{\LT}\le C h^4\|p\|_{H^2(\Omega)}^2\le\frac{C}{\beta}h^4(\|f\|_{\LT}+\|g\|_{\LT})^2.
\end{equation}
Thus we obtain
\begin{equation}
\|p-\Pi_hp\|_{\HBh}^2\le C(\beta^{-\frac{1}{2}}h^2+\beta^{-1}h^4)(\|f\|_{\LT}+\|g\|_{\LT})^2.
\end{equation}
It is similar to estimate $\|y-\Pi_hy\|_{\HBh}^2$. Therefore we have the estimate \eqref{eq:hb1esti}.
The estimate \eqref{eq:l2esti} is established by a duality argument. Let $(\xi,\theta)$ satisfies 
\begin{subequations}\label{eq:duality}
\begin{alignat}{3}
&\beta^{\frac{1}{2}}(-\Delta\xi+\bz\cdot\nabla\xi+\gamma\xi)-\theta=p-p_h,\quad &\xi=0\quad\text{on}\quad\partial\O,\\
&-\xi-\beta^{\frac{1}{2}}(-\Delta\theta-\bz\cdot\nabla\theta+(\gamma-\nabla\cdot\bz)\theta)=y-y_h,\quad &\theta=0\quad\text{on}\quad\partial\O.
\end{alignat}
\end{subequations}
The weak form of \eqref{eq:duality} is to find $(\xi,\theta)\in H^1_0(\Omega)\times H^1_0(\Omega)$ such that 
\begin{equation}\label{eq:dualityweak}
    \cB((q,z),(\xi,\theta))=(q,p-p_h)_\LT+(z,y-y_h)_\LT\quad\forall(q,z)\in\ES.
\end{equation}
We can show that (cf. \cite{leykekhman2012investigation})
\begin{equation}\label{eq:dualityidentity}
    \|p-p_h\|^2_{\LT}+\|y-y_h\|^2_{\LT}=\cB_h((p-p_h,y-y_h),(\xi,\theta)).
\end{equation}
A proof is provided in Appendix \ref{apdix:duality}.

Then it follows from Cauchy-Schwarz inequality, \eqref{eq:galortho}, \eqref{eq:dualityidentity} and Lemma \ref{h2} (apply to \eqref{eq:dualityweak}) that
\begin{equation}\label{eq:dualityprop}
\begin{aligned}
&\|p-p_h\|^2_{\LT}+\|y-y_h\|^2_{\LT}\\
&=\cB_h((p-p_h,y-y_h),(\xi,\theta))\\
&=\cB_h((p-p_h,y-y_h),(\xi-\Pi_h\xi,\theta-\Pi_h\theta))\\
&\lesssim(\|p-p_h\|^2_{\HBh}+\|y-y_h\|^2_{\HBh})^\frac12\\
&\hspace{0.3cm}\times(\|\xi-\Pi_h\xi\|^2_{\HBh}+\|\theta-\Pi_h\theta\|^2_{\HBh})^\frac12\\
&\lesssim(\beta^{-\frac12}h^2+\beta^{-1}h^4)^\frac12(\|p-p_h\|^2_\LT+\|y-y_h\|^2_\LT)^\frac12\\
&\hspace{0.3cm}\times (\|p-p_h\|^2_{\HBh}+\|y-y_h\|^2_{\HBh})^\frac12.
\end{aligned}
\end{equation}
The estimate \eqref{eq:l2esti} then follows from \eqref{eq:dualityprop} and \eqref{eq:hb1esti}.
\end{proof}

\section{Multigrid Methods}\label{sec:multigrid}

In this section we introduce the multigrid methods for \eqref{eq:regu}. A crucial block-diagonal preconditioner $\fC_k$ is introduced. The general idea is to convert the saddle point problem into an SPD problem. Then the well-established multigrid theories for SPD system \cite{brenner2005convergence,Hackbusch,bramble1993multigrid} can be utilized. We omit some proofs since they are identical to those of \cite{brenner2020multigrid}.
\subsection{Set-Up}

Let the triangulation $\mathcal{T}_1, \mathcal{T}_2, ...$ be generated from the triangulation $\mathcal{T}_0$ through uniform subdivisions such that $h_k\approx\frac12 h_{k-1}$ and $V_k$ be the DG space associated with $\mathcal{T}_k$. Our goal is to design multigrid methods for the problem of finding $(p_k,y_k)\in V_k\times V_k$ such that
\begin{equation}\label{eq:kth}
\mathcal{B}_k((p_k,y_k),(q,z))=F(q)+G(z) \quad \quad \forall (q,z)\in V_k\times V_k,
\end{equation}
where $F, G\in V_k'$, and for the dual problem of finding $(p_k,y_k)\in V_k\times V_k$ such that
\begin{equation}\label{eq:kthdual}
\mathcal{B}_k((q,z),(p_k,y_k))=F(q)+G(z) \quad \quad \forall (q,z)\in V_k\times V_k.
\end{equation}
Here $\cB_k$ represents the bilinear form $\cB_h$ on $\FK$.

Let $(\cdot,\cdot)_k$ be a mesh-dependent inner product on $V_k$ 
\begin{equation}
(v,w)_k=h_k^n\sum_{i=1}^{n_k}v(p_i)w(p_i) \quad \forall v, w\in V_k,
\end{equation}
where $h_k=\max_{T\in \mathcal{T}_k} \mbox{diam}T$, $\{p_i\}_{i=1}^{n_k}$ are the nodes in $\mathcal{T}_k$.

\begin{remark}
It can be shown that $(v,v)_k\approx\|v\|^2_{\LT}$ for $v\in V_k$ (cf. \cite{BS}).
\end{remark}

Then the mesh-dependent inner product $[\cdot,\cdot]_k$ on $V_k\times V_k$ is defined by
\begin{equation}
[(p,y),(q,z)]_k=(p,q)_k+(y,z)_k.
\end{equation}
It is easy to see
\begin{equation}\label{eq:l2normeq}
[(p,y),(p,y)]_k\approx\|p\|^2_{\LT} +\|y\|^2_{\LT}  \quad \forall (p,y)\in V_k\times V_k.
\end{equation}

The coarse-to-fine operator $I^k_{k-1}: V_{k-1}\times V_{k-1}\longrightarrow V_k\times V_k$ is the natural injection and the fine-to-coarse operator $I^{k-1}_k:V_k\times V_k\longrightarrow V_{k-1}\times V_{k-1}$ is the transpose of $I^k_{k-1}$ with respect to the mesh-dependent inner product. Indeed,
\begin{equation*}
[I^{k-1}_k(p,y),(q,z)]_{k-1}=[(p,y),I^k_{k-1}(q,z)]_k \quad \forall (p,y)\in V_k\times V_k, (q,z)\in V_{k-1}\times V_{k-1}.
\end{equation*}

Let the system operator $\fB_k:V_k\times V_k\longrightarrow V_k\times V_k$ be defined by
\begin{equation}
[\fB_k(p,y),(q,z)]_k=\mathcal{B}_k((p,y),(q,z)) \quad \forall (p,y),(q,z) \in V_k\times V_k.
\end{equation}
Then the $k$-th level problem \eqref{eq:kth} is equivalent to
\begin{equation}\label{eq:generalproblem}
\fB_k(p,y)=(f,g),
\end{equation}
where $(f,g)\in V_k\times V_k$ is defined by
\begin{equation}
    [(f,g),(q,z)]_k=F(q)+G(z)\quad\forall (q,z)\in V_k\times V_k,
\end{equation}
and the dual problem \eqref{eq:kthdual} becomes
\begin{equation}\label{eq:generalproblemdual}
\fB_k^t(p,y)=(f,g).
\end{equation}
Here, for all $(p,y),(q,z)\in\FK$, we have 
\begin{equation}
    [\fB_k^t(p,y),(q,z)]_k=[(p,y),\fB_k(q,z)]_k=\cB_k((q,z),(p,y)).
\end{equation}

\subsection{A Block-Diagonal Preconditioner}

Let $L_k: V_k\rightarrow V_k$ be an operator that is SPD with respect to $(\cdot,\cdot)_k$ and satisfies 
\begin{equation}
(L_kv,v)_k\approx\beta^\frac12\|v\|_{1,h}^2+\|v\|_\LT^2\quad \forall v\in V_k.
\end{equation}
Then the operator $\mathfrak{C}_k:V_k\times V_k\longrightarrow V_k\times V_k$ given by
\begin{equation}
\mathfrak{C}_k(p,y)=(L_kp,L_ky)
\end{equation}
is SPD with respect to $[\cdot,\cdot]_k$ and we have
\begin{equation}
[\mathfrak{C}_k(p,y),(p,y)]_k\approx\|p\|^2_{\HBh} +\|y\|^2_{\HBh} \quad \quad \forall (p,y)\in V_k\times V_k.
\end{equation}
Here the hidden constant is independent of $k$ and $\beta$.

\begin{remark}
 In practice, we use $\fC_k^{-1}$ as a block preconditioner. The operation $L_k^{-1}\phi$ can be computed approximately by solving the following boundary value problem
\begin{equation}\label{eq:ckinv}
\begin{aligned}
    -\beta^\frac12\Delta u+u&=\phi\quad\text{in}\quad\O,\\
    u&=0\quad\text{on}\quad\partial\O
\end{aligned}
\end{equation}
using a SIP discretization. This can be constructed by multigrid \cite{brenner2005convergence,gopalakrishnan2003multilevel}.
\end{remark}

\begin{lemma}\label{eqnorm}
We have 
\begin{align}
[\fB^t_k\mathfrak{C}^{-1}_k\fB_k(p,y),(p,y)]_k&\approx\|p\|^2_{\HBh} +\|y\|^2_{\HBh} \quad \forall (p,y)\in V_k\times V_k,\label{eq:ckeq}\\
[\fB_k\mathfrak{C}^{-1}_k\fB^t_k(p,y),(p,y)]_k&\approx\|p\|^2_{\HBh} +\|y\|^2_{\HBh} \quad \forall (p,y)\in V_k\times V_k.\label{eq:ckeqdual}
\end{align}
\end{lemma}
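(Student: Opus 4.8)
The plan is to prove the two equivalences in Lemma~\ref{eqnorm} by exploiting the fact that $\fC_k$ is SPD with respect to $[\cdot,\cdot]_k$, so that it induces an inner product and an associated norm on $V_k\times V_k$. The key observation is that $\fB_k^t\fC_k^{-1}\fB_k$ is the operator representation (with respect to $[\cdot,\cdot]_k$) of the composition that, when tested against $(p,y)$, produces $[\fC_k^{-1}\fB_k(p,y),\fB_k(p,y)]_k$. Concretely, I would first verify the algebraic identity
\begin{equation}\label{eq:ckeq-identity}
[\fB_k^t\fC_k^{-1}\fB_k(p,y),(p,y)]_k=[\fC_k^{-1}\fB_k(p,y),\fB_k(p,y)]_k,
\end{equation}
which follows directly from the definition of $\fB_k^t$ as the transpose of $\fB_k$ with respect to the mesh-dependent inner product. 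This reduces the problem to estimating $[\fC_k^{-1}\psi,\psi]_k$ where $\psi=\fB_k(p,y)$, i.e.\ to understanding the $\fC_k^{-1}$-weighted norm of the residual.

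The main tool will be the discrete \emph{inf-sup} stability estimate \eqref{eq:bhwposed} together with the norm equivalence $[\fC_k(p,y),(p,y)]_k\approx\|p\|^2_{\HBh}+\|y\|^2_{\HBh}$ established just above the lemma. The strategy is to recognize that, because $\fC_k$ is SPD, the quantity $[\fC_k^{-1}\psi,\psi]_k^{1/2}$ is precisely the dual norm of $\psi$ measured against the $\fC_k$-norm; that is,
\begin{equation}\label{eq:dualnorm}
[\fC_k^{-1}\psi,\psi]_k^{1/2}=\sup_{(q,z)\in\FK}\frac{[\psi,(q,z)]_k}{[\fC_k(q,z),(q,z)]_k^{1/2}}.
\end{equation}
Substituting $\psi=\fB_k(p,y)$ and using $[\fB_k(p,y),(q,z)]_k=\cB_k((p,y),(q,z))$ turns the right-hand side of \eqref{eq:dualnorm} into exactly the inf-sup quotient appearing in \eqref{eq:bhwposed}, up to the norm equivalence for $\fC_k$. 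Thus I would chain: $[\fB_k^t\fC_k^{-1}\fB_k(p,y),(p,y)]_k$ equals the square of \eqref{eq:dualnorm}, which by the $\fC_k$-equivalence is comparable to the square of the inf-sup quotient over the $\HBh$-denominator, which by \eqref{eq:bhwposed} is comparable to $(\|p\|_{\HBh}+\|y\|_{\HBh})^2\approx\|p\|^2_{\HBh}+\|y\|^2_{\HBh}$. This yields \eqref{eq:ckeq}.

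For the dual equivalence \eqref{eq:ckeqdual} I would repeat the identical argument with the roles of $\fB_k$ and $\fB_k^t$ interchanged, now invoking the dual inf-sup estimate \eqref{eq:bhwposeddual} in place of \eqref{eq:bhwposed}; the supremum identity analogous to \eqref{eq:dualnorm} applies verbatim since $\fC_k$ remains SPD, and the quotient $[\fB_k^t(p,y),(q,z)]_k=\cB_k((q,z),(p,y))$ matches the numerator of \eqref{eq:bhwposeddual}. The main obstacle, and the step requiring the most care, is establishing the supremum characterization \eqref{eq:dualnorm} of the $\fC_k^{-1}$-dual norm and tracking that the hidden constants in the norm equivalence for $\fC_k$ are independent of both $k$ and $\beta$; once \eqref{eq:dualnorm} is in hand, the remainder is a direct substitution combined with the already-established stability bounds. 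I would also confirm that the denominators $\|q\|_{\HBh}+\|z\|_{\HBh}$ and $(\|q\|^2_{\HBh}+\|z\|^2_{\HBh})^{1/2}$ are equivalent, so that the single-square-root normalization in \eqref{eq:dualnorm} aligns with the sum-of-norms normalization in \eqref{eq:bhwposed}, which holds by the elementary inequality relating the $\ell^1$ and $\ell^2$ norms of the pair.
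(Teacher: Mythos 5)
Your proposal is correct and follows essentially the same route as the paper's (omitted, but referenced) proof: reduce $[\fB^t_k\fC^{-1}_k\fB_k(p,y),(p,y)]_k$ to $[\fC_k^{-1}\fB_k(p,y),\fB_k(p,y)]_k$, identify this with the square of the dual norm of the residual via the supremum characterization induced by the SPD operator $\fC_k$, and then invoke the norm equivalence $[\fC_k(q,z),(q,z)]_k\approx\|q\|^2_{\HBh}+\|z\|^2_{\HBh}$ together with the discrete stability estimates \eqref{eq:bhwposed} and \eqref{eq:bhwposeddual}. All the ingredients you cite are available in the paper with constants independent of $k$ and $\beta$, so the argument goes through as written.
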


\begin{lemma}\label{lem:lambda}
There exists positive constants $C_{\text{min}}$ and $C_{\text{max}}$, independent of $k$ and $\beta$, such that
\begin{alignat}{3}
&\lambda_{\text{min}}(\fB^t_k\mathfrak{C}_k^{-1}\fB_k)\ge C_{\text{min}},&&\quad\lambda_{\text{min}}(\fB_k\mathfrak{C}_k^{-1}\fB^t_k)\ge C_{\text{min}},\label{eq:eigenlb}\\
&\lambda_{\text{max}}(\fB^t_k\mathfrak{C}_k^{-1}\fB_k)\le C_{\text{max}}(\beta^{\frac{1}{2}}h_k^{-2}+1),&&\quad \lambda_{\text{max}}(\fB_k\mathfrak{C}_k^{-1}\fB^t_k)\le C_{\text{max}}(\beta^{\frac{1}{2}}h_k^{-2}+1).\label{eq:eigenub}
\end{alignat}
\end{lemma}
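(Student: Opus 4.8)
The plan is to exploit the fact that both $\fB_k^t\fC_k^{-1}\fB_k$ and $\fB_k\fC_k^{-1}\fB_k^t$ are SPD with respect to the mesh-dependent inner product $[\cdot,\cdot]_k$, so that their extreme eigenvalues admit the variational (Rayleigh quotient) characterization. Indeed, since $\fC_k^{-1}$ is SPD and $\fB_k$ is invertible, one checks directly that $(\fB_k^t\fC_k^{-1}\fB_k)^t=\fB_k^t\fC_k^{-1}\fB_k$ and that
\begin{equation*}
[\fB_k^t\fC_k^{-1}\fB_k(p,y),(p,y)]_k=[\fC_k^{-1}\fB_k(p,y),\fB_k(p,y)]_k>0
\end{equation*}
whenever $(p,y)\ne0$; the analogous assertion holds for $\fB_k\fC_k^{-1}\fB_k^t$. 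Writing $\fA_k$ for either operator, I would therefore use
\begin{equation*}
\lambda_{\min}(\fA_k)=\min_{(p,y)\ne0}\frac{[\fA_k(p,y),(p,y)]_k}{[(p,y),(p,y)]_k},\qquad
\lambda_{\max}(\fA_k)=\max_{(p,y)\ne0}\frac{[\fA_k(p,y),(p,y)]_k}{[(p,y),(p,y)]_k}.
\end{equation*}

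First I would bound the numerator and denominator separately. The numerator is handled by Lemma \ref{eqnorm}, which gives $[\fA_k(p,y),(p,y)]_k\approx\|p\|^2_{\HBh}+\|y\|^2_{\HBh}$ (using \eqref{eq:ckeq} for $\fB_k^t\fC_k^{-1}\fB_k$ and \eqref{eq:ckeqdual} for $\fB_k\fC_k^{-1}\fB_k^t$), while the denominator is controlled by \eqref{eq:l2normeq}, which gives $[(p,y),(p,y)]_k\approx\|p\|^2_\LT+\|y\|^2_\LT$. Consequently the full Rayleigh quotient is comparable, with $k$- and $\beta$-independent constants, to $(\|p\|^2_{\HBh}+\|y\|^2_{\HBh})/(\|p\|^2_\LT+\|y\|^2_\LT)$, so the two estimates reduce to bounding this ratio from below and above.

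For the lower bounds I would simply observe that $\|v\|^2_{\HBh}=\beta^{\frac12}\|v\|^2_{1,h}+\|v\|^2_\LT\ge\|v\|^2_\LT$, so the ratio is bounded below by an absolute constant, yielding $\lambda_{\min}(\fA_k)\ge C_{\min}$ with $C_{\min}$ independent of $k$ and $\beta$. For the upper bounds I would invoke the inverse estimate \eqref{eq:dginverse}, valid because each $\mathcal{T}_k$ is obtained from $\mathcal{T}_0$ by uniform subdivision and is hence quasi-uniform; this gives $\|v\|_{1,h}\lesssim h_k^{-1}\|v\|_\LT$, and therefore $\|v\|^2_{\HBh}\lesssim(\beta^{\frac12}h_k^{-2}+1)\|v\|^2_\LT$, which bounds the ratio above by $C_{\max}(\beta^{\frac12}h_k^{-2}+1)$ and completes \eqref{eq:eigenub}.

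I do not expect a genuine obstacle here: all the substance is already encapsulated in the norm equivalence of Lemma \ref{eqnorm} and in the inverse estimate \eqref{eq:dginverse}, and what remains is bookkeeping with the Rayleigh quotient. The only points demanding mild care are confirming the SPD property so that the variational eigenvalue characterization is legitimate, and tracking that every hidden constant is independent of both $k$ and $\beta$ — which it is, since the equivalences in Lemma \ref{eqnorm} and \eqref{eq:l2normeq} enjoy this property and the inverse constant depends only on the shape regularity of the quasi-uniform family $\{\mathcal{T}_k\}$.
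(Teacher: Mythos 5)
Your proposal is correct and follows essentially the same route as the paper: the paper also combines the norm equivalence of Lemma \ref{eqnorm} with \eqref{eq:l2normeq} and the inverse estimate \eqref{eq:dginverse}, and then concludes via the Rayleigh quotient formula. Your additional verification of the SPD property (needed to justify the variational characterization of the eigenvalues) is a point the paper leaves implicit but is a welcome piece of care.
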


\begin{proof}
    We only prove the estimates involving $\fB^t_k\mathfrak{C}_k^{-1}\fB_k$, other estimates are similar. It follows from \eqref{eq:ckeq} and \eqref{eq:l2normeq} that
    \begin{equation}
        [\fB^t_k\mathfrak{C}_k^{-1}\fB_k(p,y),(p,y)]_k\ge \|p\|^2_\LT+\|y\|^2_\LT\approx[(p,y),(p,y)]_k. 
    \end{equation}
    The estimate \eqref{eq:eigenlb} then is trivial by Rayleigh quotient formula.
    We also have, by \eqref{eq:ckeq}, \eqref{eq:dginverse} and \eqref{eq:l2normeq}, that
    \begin{equation}
        [\fB^t_k\mathfrak{C}_k^{-1}\fB_k(p,y),(p,y)]_k\lesssim (\beta^{\frac{1}{2}}h_k^{-2}+1)[(p,y),(p,y)]_k. 
    \end{equation}
    The estimate \eqref{eq:eigenub} is immediate by Rayleigh quotient formula.
\end{proof}

\begin{remark}\label{rem:bcbwellcon}
    Lemma \ref{lem:lambda} implies that the operators $\fB^t_k\mathfrak{C}_k^{-1}\fB_k$ and $\fB_k\mathfrak{C}_k^{-1}\fB^t_k$ are well-conditioned when $\beta^{\frac{1}{2}}h_k^{-2}\le1$.
\end{remark}

\subsection{$W$-cycle algorithm}\label{sec:wcycle}

Let the output of the $W$-cycle algorithm for \eqref{eq:generalproblem} with initial guess $(p_0, y_0)$ and $m_1$ (resp. $m_2$) pre-smoothing (resp. post-smoothing) steps be denoted by $MG_W(k,(f,g), (p_0, y_0),m_1,m_2)$.

We use a direct solve for $k=0$, i.e., we take  $MG_W(0,(f,g), (p_0, y_0),m_1,m_2)$ to be $\cB_0^{-1}(f,g)$. For $k\ge1$, we compute  $MG_W(k,(f,g), (p_0, y_0),m_1,m_2)$ in three steps.
\
\\

\noindent {\em Pre-Smoothing}\quad The approximate solutions $(p_1,y_1), \ldots, (p_{m_1},y_{m_1})$ are computed recursively by

\begin{equation}\label{eq:pre}
(p_j,y_j)=(p_{j-1},y_{j-1})+\lambda_k\mathfrak{C}_k^{-1}\fB^t_k((f,g)-\fB_k(p_{j-1},y_{j-1}))
\end{equation}
for $1\le j \le m_1$. The choice of the damping factor $\lambda_k$ is determined by the following criteria,
\begin{subequations}\label{eq:lambdak}
\begin{align}
    \lambda_k&=\frac{C}{\beta^{\frac12}h_k^{-2}+1}\quad\text{when}\quad \beta^{\frac12}h_k^{-2}\ge1,\label{eq:lambdak1}\\
    \lambda_k&=\frac{2}{\lambda_{min}+\lambda_{max}}\quad\text{when}\quad \beta^{\frac12}h_k^{-2}<1,\label{eq:lambdak2}
\end{align}
\end{subequations}
where $\lambda_{min}$ and $\lambda_{max}$ are the smallest and largest eigenvalues of $\fB^t_k\mathfrak{C}_k^{-1}\fB_k$ respectively.
\
\\

\noindent {\em Coarse Grid Correction}\quad Let $(f',g')=I^{k-1}_k((f,g)-\fB_k(p_{m_1}, y_{m_1}))$ be the transferred residual of $(p_{m_1},y_{m_1})$ and compute $(p'_1,y'_1),(p'_2,y'_2)\in V_{k-1}\times V_{k-1}$ by 
\begin{eqnarray}
(p'_1,y'_1)&=&MG_W(k-1,(f',g'),(0,0),m_1,m_2),\\
(p'_2,y'_2)&=&MG_W(k-1,(f',g'),(p'_1,y'_1),m_1,m_2).
\end{eqnarray}
We then take $(p_{m_1+1},y_{m_1+1})$ to be $(p_{m_1},y_{m_1})+I^k_{k-1}(p'_2,y'_2)$.
\
\\

\noindent {\em Post-Smoothing}\quad The approximate solutions $(p_{m_1+2},y_{m_1+2}), \ldots, (p_{m_1+m_2+1},y_{m_1+m_2+1})$ are computed recursively by

\begin{equation}\label{eq:post}
(p_j,y_j)=(p_{j-1},y_{j-1})+\lambda_k\fB^t_k\mathfrak{C}_k^{-1}((f,g)-\fB_k(p_{j-1},y_{j-1}))
\end{equation}
for $m_1+2\le j \le m_1+m_2+1$.

The final output is $MG_W(k,(f,g), (p_0, y_0),m_1,m_2)=(p_{m_1+m_2+1},y_{m_1+m_2+1})$.

\begin{remark}
    The choice of \eqref{eq:lambdak1} is motivated by Lemma \ref{lem:lambda} such that \linebreak$\lambda_{max}(\lambda_k\fB^t_k\mathfrak{C}_k^{-1}\fB_k)\le 1$. The choice of \eqref{eq:lambdak2} is motivated by the optimal choice of Richardson iteration \cite{Saad} and the well-conditioning of $\fB^t_k\mathfrak{C}_k^{-1}\fB_k$ (cf. Remark \ref{rem:bcbwellcon}).
\end{remark}

\begin{remark}
    Note that the post-smoothing step is the Richardson iteration of the SPD system
    \begin{equation}
        \fB^t_k\mathfrak{C}_k^{-1}\fB_k(p,y)=\fB^t_k\mathfrak{C}_k^{-1}(f,g)
    \end{equation}
    which is equivalent to \eqref{eq:generalproblem}.
\end{remark}

\subsection{$V$-cycle algorithm}\label{sec:vcycle}
Let the output of the $V$-cycle algorithm for \eqref{eq:generalproblem} with initial guess $(p_0, y_0)$ and $m_1$ (resp. $m_2$) pre-smoothing (resp. post-smoothing) steps be denoted by $MG_V(k,(f,g), (p_0, y_0),m_1,m_2)$. 

The computation of $MG_V(k,(f,g),(p_0, y_0),m_1,m_2)$ differs from the computation of the $W$-cycle algorithm only in the coarse grid correction step, where we compute
\begin{equation}
(p'_1,y'_1)=MG_V(k-1,(f',g'),(0,0),m_1,m_2)
\end{equation}
and take $(p_{m_1+1},y_{m_1+1})$ to be $(p_{m_1},y_{m_1})+I^k_{k-1}(p'_1,y'_1)$.

\subsection{Multigrid algorithms for \eqref{eq:generalproblemdual}}
We define $W$-cycle and $V$-cycle algorithms for \eqref{eq:generalproblemdual} by simply interchanging $\fB^t_k$ and $\fB_k$ in sections \ref{sec:wcycle} and \ref{sec:vcycle}. The pre-smoothing step is given by
 \begin{equation}\label{predual}
(p_j,y_j)=(p_{j-1},y_{j-1})+\lambda_k\mathfrak{C}_k^{-1}\fB_k((f,g)-\fB^t_k(p_{j-1},y_{j-1}))
\end{equation}
and the post-smoothing step is given by
\begin{equation}\label{eq:postdual}
(p_j,y_j)=(p_{j-1},y_{j-1})+\lambda_k\fB_k\mathfrak{C}_k^{-1}((f,g)-\fB^t_k(p_{j-1},y_{j-1})).
\end{equation}
 
 \section{Smoothing and Approximation Properties}\label{sec:smapprox}
 
 In this section, we establish the smoothing property and the approximation property of the $W$-cycle algorithm. These results can then be used to establish the convergence of $W$-cycle algorithm as in \cite{brenner2020multigrid}. We omit some proofs since they are identical to those of \cite{brenner2020multigrid}.

\subsection{A Scale of Mesh-Dependent Norms}
For $0\le s\le1$, we define
\begin{alignat}{3}
\trinorm{(p,y)}_{s,k}&=[(\fB^t_k\mathfrak{C}_k^{-1}\fB_k)^s(p,y),(p,y)]^{\frac{1}{2}}_k&&\quad \forall (p,y) \in V_k\times V_k,\label{norms}\\
\trinorm{(p,y)}^{\widetilde{ }}_{s,k}&=[(\fB_k\mathfrak{C}_k^{-1}\fB^t_k)^s(p,y),(p,y)]^{\frac{1}{2}}_k&&\quad \forall (p,y) \in V_k\times V_k\label{normsdual}.
\end{alignat}
Note that
\begin{equation}\label{eq:0knormeq}
    \trinorm{(p,y)}^2_{0,k}\approx\|p\|^2_\LT+\|y\|^2_\LT\approx(\trinorm{(p,y)}^{\widetilde{ }}_{0,k})^2\quad\forall (p,y)\in\FK,
\end{equation}
by \eqref{eq:l2normeq} and
\begin{equation}\label{eq:1knormeq}
    \trinorm{(p,y)}^2_{1,k}\approx\|p\|^2_\HBh+\|y\|^2_\HBh\approx(\trinorm{(p,y)}^{\widetilde{ }}_{1,k})^2\quad\forall (p,y)\in\FK,
\end{equation}
by \eqref{eq:ckeq} and \eqref{eq:ckeqdual}.

\subsection{Post-Smoothing Properties}
The error propagation operator for one post-smoothing step defined by \eqref{eq:post} is given by
\begin{equation}\label{eq:postop}
R_k=Id_k-\lambda_k\fB^t_k\mathfrak{C}_k^{-1}\fB_k,
\end{equation}
where $Id_k$ is the identity operator on $V_k\times V_k$.
We also need the error propagation operator for one post-smoothing step defined by \eqref{eq:postdual} which is
\begin{equation}\label{eq:postopdual}
\tilde{R}_k=Id_k-\lambda_k\fB_k\mathfrak{C}_k^{-1}\fB^t_k.
\end{equation}

\begin{lemma}[Smoothing properties]\label{lem:postsm}
In the case where $\beta^\frac12 h_k^{-1}<1$, we have,
\begin{align}
    \trinorm{R_k(p,y)}_{1,k}&\le \tau\trinorm{(p,y)}_{1,k}\quad \forall (p,y)\in \FK,\\
    \trinorm{\tilde{R}_k(p,y)}^{\widetilde{ }}_{1,k}&\le \tau\trinorm{(p,y)}^{\widetilde{ }}_{1,k}\quad \forall (p,y)\in \FK,
\end{align}
where $\tau\in(0,1)$ is independent of $k$ and $\beta$.

In the case where $\beta^\frac12 h_k^{-1}\ge1$, we have, for $0\le s\le 1$,
\begin{align}
\trinorm{R_k^m(p,y)}_{1,k}&\le C(\beta^{\frac{1}{2}}h_k^{-2}+1)^{s/2}m^{-s/2}\trinorm{(p,y)}_{1-s,k} \quad \forall (p,y)\in \FK,\\
\trinorm{\tilde{R}_k^m(p,y)}^{\widetilde{ }}_{1,k}&\le C(\beta^{\frac{1}{2}}h_k^{-2}+1)^{s/2}m^{-s/2}\trinorm{(p,y)}^{\widetilde{ }}_{1-s,k} \quad \forall (p,y)\in \FK,
\end{align}
where the constant $C$ is independent of $k$ and $\beta$.
\end{lemma}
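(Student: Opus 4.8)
The plan is to diagonalize the symmetric positive definite operator $A_k:=\fB^t_k\mathfrak{C}_k^{-1}\fB_k$ and reduce every inequality to a scalar estimate on its spectrum. The key structural observation is that the smoothing operator in \eqref{eq:postop}, $R_k=Id_k-\lambda_k A_k$, is a polynomial in $A_k$ and therefore commutes with every fractional power $A_k^s$; since the norms \eqref{norms} are built from $A_k^s$, all the mesh-dependent norms are simultaneously diagonalized in any eigenbasis of $A_k$, and the two cases of the statement become pointwise bounds on scalar multipliers.

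First I would record that $A_k$ is symmetric and positive definite with respect to $[\cdot,\cdot]_k$: symmetry holds because $\fB^t_k$ is the $[\cdot,\cdot]_k$-adjoint of $\fB_k$ and $\mathfrak{C}_k^{-1}$ is self-adjoint, while positivity follows from the invertibility of $\fB_k$, a consequence of \eqref{eq:bhwposed}. Let $\{(\phi_i,\mu_i)\}_i$ be an $[\cdot,\cdot]_k$-orthonormal eigensystem of $A_k$; by Lemma \ref{lem:lambda} the eigenvalues satisfy $C_{\text{min}}\le\mu_i\le C_{\text{max}}(\beta^{\frac12}h_k^{-2}+1)$. Expanding $(p,y)=\sum_i c_i\phi_i$ then yields the diagonal identities $\trinorm{(p,y)}_{s,k}^2=\sum_i\mu_i^s c_i^2$ and $\trinorm{R_k^m(p,y)}_{1,k}^2=\sum_i(1-\lambda_k\mu_i)^{2m}\mu_i c_i^2$, so that everything reduces to controlling the factor $(1-\lambda_k\mu_i)^{2m}\mu_i$ pointwise in $i$. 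I split according to the damping rule \eqref{eq:lambdak}, whose two branches correspond to the two cases of the statement.

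When the damping is the optimal Richardson value \eqref{eq:lambdak2} (the well-conditioned regime, cf. Remark \ref{rem:bcbwellcon}), Lemma \ref{lem:lambda} provides a condition number $\kappa:=\lambda_{\text{max}}/\lambda_{\text{min}}$ bounded independently of $k$ and $\beta$; with this choice each factor obeys $|1-\lambda_k\mu_i|\le(\kappa-1)/(\kappa+1)=:\tau<1$, and summing against $\mu_i c_i^2$ gives the single-step contraction $\trinorm{R_k(p,y)}_{1,k}\le\tau\trinorm{(p,y)}_{1,k}$ with $\tau$ independent of $k$ and $\beta$. When the damping is \eqref{eq:lambdak1}, it is calibrated through Lemma \ref{lem:lambda} so that $x_i:=\lambda_k\mu_i\in(0,1]$, whence $1-x_i\in[0,1)$. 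I would factor $(1-x_i)^{2m}\mu_i=\lambda_k^{-s}\,x_i^s(1-x_i)^{2m}\,\mu_i^{1-s}$, use $\lambda_k^{-s}\approx(\beta^{\frac12}h_k^{-2}+1)^s$, and invoke the elementary maximization $\sup_{x\in[0,1]}x^s(1-x)^{2m}\le C_s m^{-s}$; summing against $c_i^2$ and recognizing $\sum_i\mu_i^{1-s}c_i^2=\trinorm{(p,y)}_{1-s,k}^2$ produces, after taking square roots, the claimed $m^{-s/2}(\beta^{\frac12}h_k^{-2}+1)^{s/2}$ bound. The dual estimates for $\tilde R_k$ and $\trinorm{\cdot}^{\widetilde{ }}_{s,k}$ follow verbatim, since $\fB_k\mathfrak{C}_k^{-1}\fB^t_k$ is SPD with the same spectral bounds in Lemma \ref{lem:lambda}.

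The one genuinely nontrivial point, and the heart of the argument, is the maximization $\sup_{x\in[0,1]}x^s(1-x)^{2m}\le C_s m^{-s}$ with a constant uniform in $s\in[0,1]$: this is where the $m^{-s/2}$ smoothing rate is born. I would handle it by locating the maximizer of $g(x)=x^s(1-x)^{2m}$ near $x\approx s/(2m)$ and bounding $g$ there, or equivalently by using $(1-x)^{2m}\le e^{-2mx}$ and maximizing $x^s e^{-2mx}$; a short computation shows $g(x^\ast)m^s\lesssim1$ uniformly in $s$ and $m$, which closes the estimate. Everything else is bookkeeping once the problem is diagonalized, the only structural inputs being the spectral bounds of Lemma \ref{lem:lambda}, the damping calibration \eqref{eq:lambdak}, and the norm equivalence \eqref{eq:1knormeq}.
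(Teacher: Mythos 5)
Your proposal is correct and follows essentially the same route the paper takes: the paper omits this proof, deferring to \cite{brenner2020multigrid}, where the argument is exactly this spectral diagonalization of the SPD operator $\fB_k^t\fC_k^{-1}\fB_k$ combined with the eigenvalue bounds of Lemma \ref{lem:lambda}, the two branches of the damping rule \eqref{eq:lambdak}, and the elementary bound $\sup_{x\in[0,1]}x^s(1-x)^{2m}\le m^{-s}$. The only caveat is that you (sensibly) align the case split with the condition $\beta^{1/2}h_k^{-2}\lessgtr 1$ from \eqref{eq:lambdak} rather than the $\beta^{1/2}h_k^{-1}$ appearing in the lemma statement, which appears to be a typo in the paper.
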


\subsection{Approximation Properties}
The operators $P^{k-1}_k:V_k\times V_k\rightarrow V_{k-1}\times V_{k-1}$ and $\tilde{P}^{k-1}_k:V_k\times V_k\rightarrow V_{k-1}\times V_{k-1}$ are defined as follows. For all $(p,y)\in V_k\times V_k, (q,z)\in V_{k-1}\times V_{k-1}$
\begin{align}
\mathcal{B}_{k-1}(P^{k-1}_k(p,y),(q,z))&=\mathcal{B}_{k}((p,y),I_{k-1}^k(q,z))=\mathcal{B}_k((p,y),(q,z)),\label{eq:project}\\
\mathcal{B}_{k-1}((q,z),\tilde{P}^{k-1}_k(p,y))&=\mathcal{B}_{k}(I_{k-1}^k(q,z),(p,y))=\mathcal{B}_k((q,z),(p,y))\label{eq:projectdual}.
\end{align}

\begin{lemma}
We have the following properties,
\begin{alignat}{3}
&(I^k_{k-1}P^{k-1}_k)^2=I^k_{k-1}P^{k-1}_k,\quad&&(Id_k-I^k_{k-1}P^{k-1}_k)^2=Id_k-I^k_{k-1}P^{k-1}_k,\label{eq:IP1}\\
&(I^k_{k-1}\tilde{P}^{k-1}_k)^2=I^k_{k-1}\tilde{P}^{k-1}_k,\quad&&(Id_k-I^k_{k-1}\tilde{P}^{k-1}_k)^2=Id_k-I^k_{k-1}\tilde{P}^{k-1}_k.\label{eq:IP2}
\end{alignat}
\end{lemma}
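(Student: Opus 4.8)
The plan is to reduce both idempotency identities to the single algebraic fact that $P^{k-1}_k I^k_{k-1}=Id_{k-1}$ (together with its dual analogue $\tilde P^{k-1}_k I^k_{k-1}=Id_{k-1}$), after which the projection properties in \eqref{eq:IP1}--\eqref{eq:IP2} follow by elementary manipulation.

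First I would establish $P^{k-1}_k I^k_{k-1}=Id_{k-1}$. Fix $(r,s)\in V_{k-1}\times V_{k-1}$ and apply the defining relation \eqref{eq:project} with $(p,y)=I^k_{k-1}(r,s)$: for every $(q,z)\in V_{k-1}\times V_{k-1}$,
\[
\mathcal{B}_{k-1}\big(P^{k-1}_k I^k_{k-1}(r,s),(q,z)\big)=\mathcal{B}_k\big(I^k_{k-1}(r,s),I^k_{k-1}(q,z)\big).
\]
The crux is to show the right-hand side equals $\mathcal{B}_{k-1}((r,s),(q,z))$, i.e. that the fine-level form $\mathcal{B}_k$ agrees with the coarse-level form $\mathcal{B}_{k-1}$ on functions coming from the nested space $V_{k-1}\times V_{k-1}$. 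Granting this consistency, we get $\mathcal{B}_{k-1}\big(P^{k-1}_k I^k_{k-1}(r,s)-(r,s),(q,z)\big)=0$ for all $(q,z)$, and the nondegeneracy of $\mathcal{B}_{k-1}$ on $V_{k-1}\times V_{k-1}$ supplied by the discrete inf–sup bound \eqref{eq:bhwposed} at level $k-1$ forces $P^{k-1}_k I^k_{k-1}(r,s)=(r,s)$. The dual identity $\tilde P^{k-1}_k I^k_{k-1}=Id_{k-1}$ is obtained identically from \eqref{eq:projectdual} and \eqref{eq:bhwposeddual}.

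With these identities the lemma is immediate. Since $I^k_{k-1}$ is injective,
\[
(I^k_{k-1}P^{k-1}_k)^2=I^k_{k-1}\big(P^{k-1}_k I^k_{k-1}\big)P^{k-1}_k=I^k_{k-1}P^{k-1}_k,
\]
which is the first identity in \eqref{eq:IP1}, and the same computation applied to $\tilde P^{k-1}_k$ gives the first identity in \eqref{eq:IP2}. The complementary statements then follow by expanding $(Id_k-I^k_{k-1}P^{k-1}_k)^2=Id_k-2I^k_{k-1}P^{k-1}_k+(I^k_{k-1}P^{k-1}_k)^2$ and inserting the idempotency just proved; the identical expansion handles $Id_k-I^k_{k-1}\tilde P^{k-1}_k$.

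The main obstacle is the consistency step $\mathcal{B}_k(I^k_{k-1}\cdot,I^k_{k-1}\cdot)=\mathcal{B}_{k-1}(\cdot,\cdot)$ on the nested subspace, since this is the only place where the discontinuous-Galerkin structure (as opposed to the conforming setting of \cite{brenner2020multigrid}) demands genuine checking. The element integrals in \eqref{eq:dgbilinearsip}--\eqref{eq:ardef} and the $L_2$-terms in \eqref{eq:dgbilinear} are partition-independent, and a coarse piecewise-linear function is globally affine on each coarse simplex, so its jumps vanish across every edge of $\mathcal{T}_k$ introduced by the refinement; consequently all edge contributions in $a^{\text{sip}}_h$ and $a^{\text{ar}}_h$ collapse onto the coarse edges. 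I would then verify edge by edge that the average and jump factors, together with the edge-length weights $h_e^{-1}$ and $h_e$, correctly reproduce the coarse-mesh form. Confirming this reduction—in particular the bookkeeping of the penalty weights under subdivision—is the delicate point on which I would concentrate the technical effort.
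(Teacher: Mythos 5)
Your overall route is the same as the paper's: both arguments reduce the lemma to the single identity $P^{k-1}_kI^k_{k-1}=Id_{k-1}$ (and its tilde analogue), obtained from the chain $\mathcal{B}_{k-1}(P^{k-1}_kI^k_{k-1}(p,y),(q,z))=\mathcal{B}_k(I^k_{k-1}(p,y),I^k_{k-1}(q,z))=\mathcal{B}_{k-1}((p,y),(q,z))$ together with the nondegeneracy of $\mathcal{B}_{k-1}$ supplied by \eqref{eq:bhwposed}, after which the four identities follow by direct computation. The paper's proof is exactly this, with the middle equality --- the nestedness of the discrete forms on the coarse subspace --- asserted in a single line and no further comment.

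The step you defer, the ``bookkeeping of the penalty weights under subdivision,'' is therefore the entire content of the proof, and your instinct that it is the delicate point is correct: carried out literally, the check does not close. For $u,v\in V_{k-1}$ the jumps indeed vanish on the new edges interior to coarse simplices, and the volume, consistency and advection--reaction terms of \eqref{eq:dgbilinearsip}--\eqref{eq:ardef} collapse onto the coarse mesh as you describe. But a coarse edge $E$ of length $h_E$ is split into two fine edges of length $h_E/2$, on each of which the traces of $u,v$ from the neighbouring fine simplices coincide with the coarse traces; hence the fine-level penalty contributes $\sigma(h_E/2)^{-1}\int_E[u][v]\,ds$, which is \emph{twice} the coarse-level penalty $\sigma h_E^{-1}\int_E[u][v]\,ds$, and $[u]$ is generically nonzero on coarse interior edges for DG functions. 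So with $h_e$ defined as the length of the fine edge one has $a_k(u,v)\ne a_{k-1}(u,v)$ on $V_{k-1}\times V_{k-1}$, the middle equality above fails, and $I^k_{k-1}P^{k-1}_k$ is not obviously a projection. Closing the argument requires either a refinement-consistent convention for the penalty weight (so that the weights on the sub-edges of $E$ reproduce $\sigma h_E^{-1}$) or a different proof of $P^{k-1}_kI^k_{k-1}=Id_{k-1}$ that does not rest on nestedness of the forms. As written, your proposal has a genuine gap at precisely this point --- the same point at which the paper's own proof is thinnest; the remaining ingredients (the use of \eqref{eq:project} and \eqref{eq:bhwposed}, and the expansion of $(Id_k-I^k_{k-1}P^{k-1}_k)^2$, for which injectivity of $I^k_{k-1}$ is not even needed) are fine.
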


\begin{proof}
    We only prove the identities \eqref{eq:IP1}, the ones in \eqref{eq:IP2} are similar. Notice that for $(p,y)\in\FKm$, we have
    \begin{equation}
        \cB_{k-1}(P^{k-1}_kI^k_{k-1}(p,y),(q,z))=\cB_k(I^k_{k-1}(p,y),I^k_{k-1}(q,z))=\cB_{k-1}((p,y),(q,z))
    \end{equation}
    for all $(q,z)\in\FKm$. This implies $P^{k-1}_kI^k_{k-1}=Id_{k-1}$. Then the equalities \eqref{eq:IP1} are immediate by a direct calculation.
\end{proof}

\begin{lemma}[Approximation Properties]\label{lem:approx}
We have, for all $(p,y)\in V_k \times V_k$ and $k\ge1$,
\begin{align}
\trinorm{(Id_k-I^k_{k-1}P^{k-1}_k)(p,y)}_{0,k}&\lesssim ({\beta}^{\frac{1}{2}}h_k^{-2}+1)^{\frac{1}{2}}\beta^{-\frac{1}{2}}h_k^2\trinorm{(p,y)}_{1,k}, \label{eq:approx}\\
\trinorm{(Id_k-I^k_{k-1}\tilde{P}^{k-1}_k)(p,y)}^{\widetilde{ }}_{0,k}&\lesssim ({\beta}^{\frac{1}{2}}h_k^{-2}+1)^{\frac{1}{2}}\beta^{-\frac{1}{2}}h_k^2\trinorm{(p,y)}^{\widetilde{ }}_{1,k}.\label{eq:approxdual}
\end{align}
Here the hidden constant is independent of $k$ and $\beta$.
\end{lemma}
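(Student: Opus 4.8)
The plan is to reduce everything to the equivalent $L_2$- and energy-type norms and then run a duality argument driven by the $L_2$ error estimate already packaged in Lemma \ref{lem:derr}. Set $e=(Id_k-I^k_{k-1}P^{k-1}_k)(p,y)=(e_1,e_2)$. By \eqref{eq:0knormeq} and \eqref{eq:1knormeq} the claim \eqref{eq:approx} is equivalent to $\|e_1\|_\LT+\|e_2\|_\LT\lesssim(\beta^{1/2}h_k^{-2}+1)^{1/2}\beta^{-1/2}h_k^2(\|p\|_\HBh+\|y\|_\HBh)$. First I would record a Galerkin orthogonality for $e$: writing $(\tilde p,\tilde y)=I^k_{k-1}P^{k-1}_k(p,y)$ and combining the injection-commuting identity $\cB_k(I^k_{k-1}(\cdot),I^k_{k-1}(\cdot))=\cB_{k-1}(\cdot,\cdot)$ (already used in the proof of \eqref{eq:IP1}) with the defining relation \eqref{eq:project}, one gets $\cB_k(e,I^k_{k-1}(q,z))=0$ for all $(q,z)\in\FKm$. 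Thus $e$ is the difference of the fine- and coarse-level DG solutions of a single common problem, and is $\cB_k$-orthogonal in its first slot to the injected coarse space.

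Next I would set up the duality. Let $(\xi,\theta)\in\ES$ solve the continuous dual problem $\cB((q,z),(\xi,\theta))=(q,e_1)_\LT+(z,e_2)_\LT$ for all $(q,z)\in\ES$; the dual version of Lemma \ref{h2} then gives $\|\beta^{1/2}\xi\|_{\H2}+\|\beta^{1/2}\theta\|_{\H2}\le C_\Omega(\|e_1\|_\LT+\|e_2\|_\LT)$. Invoking DG consistency for the smooth solution $(\xi,\theta)$ — the same identity established in Appendix \ref{apdix:duality} for \eqref{eq:dualityidentity} — I would obtain $\|e_1\|^2_\LT+\|e_2\|^2_\LT=\cB_k(e,(\xi,\theta))$. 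Subtracting the coarse nodal interpolant $I^k_{k-1}\Pi_{k-1}(\xi,\theta)$ (allowed by the orthogonality above) and applying the continuity \eqref{eq:bhcont} yields
\[
\|e_1\|^2_\LT+\|e_2\|^2_\LT\lesssim(\|e_1\|^2_\HBh+\|e_2\|^2_\HBh)^{1/2}(\|\xi-\Pi_{k-1}\xi\|^2_\HBh+\|\theta-\Pi_{k-1}\theta\|^2_\HBh)^{1/2}.
\]
Since $\xi-\Pi_{k-1}\xi$ and $\theta-\Pi_{k-1}\theta$ are continuous, their jump contributions vanish and the interpolation estimates \eqref{eq:interpolation}, \eqref{eq:inter1h} on $\cT_{k-1}$ bound the second factor by $(\beta^{1/2}h_k^2+h_k^4)^{1/2}(|\xi|_{\H2}+|\theta|_{\H2})$; the regularity bound turns this into $(\beta^{1/2}h_k^2+h_k^4)^{1/2}\beta^{-1/2}(\|e_1\|_\LT+\|e_2\|_\LT)$. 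Dividing out one power of $\|e\|_\LT$ and rewriting $(\beta^{1/2}h_k^2+h_k^4)^{1/2}\beta^{-1/2}=(\beta^{1/2}h_k^{-2}+1)^{1/2}\beta^{-1/2}h_k^2$ produces exactly the target factor in front of $\|e_1\|_\HBh+\|e_2\|_\HBh$.

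It then remains to absorb the energy norm of $e$ into that of $(p,y)$, i.e. the energy-stability of the projection $I^k_{k-1}P^{k-1}_k$. I would show $\|I^k_{k-1}P^{k-1}_k(p,y)\|_\HBh\lesssim\|p\|_\HBh+\|y\|_\HBh$ by combining the coarse-level inf-sup bound \eqref{eq:bhwposed}, the defining relation \eqref{eq:project}, the continuity \eqref{eq:bhcont}, and the fact that the natural injection is stable in the energy norm (which on nested meshes holds because coarse $\mathbb{P}_1$ functions have vanishing jumps across fine edges interior to coarse elements). A triangle inequality gives $\|e_1\|_\HBh+\|e_2\|_\HBh\lesssim\|p\|_\HBh+\|y\|_\HBh$, and \eqref{eq:1knormeq} finishes \eqref{eq:approx}. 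The dual estimate \eqref{eq:approxdual} follows verbatim after interchanging $\fB_k\leftrightarrow\fB^t_k$, replacing $P^{k-1}_k$ by $\tilde P^{k-1}_k$ and \eqref{eq:project} by \eqref{eq:projectdual}, and using the primal regularity problem \eqref{eq:regu} in the duality step.

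The hard part will be the duality identity $\|e\|^2_\LT=\cB_k(e,(\xi,\theta))$: it rests on DG consistency for the merely $H^2$-regular continuous dual solution and must be handled as in Appendix \ref{apdix:duality}, and one must keep careful track of whether interpolation and injection quantities are measured in the level-$k$ or the level-$(k-1)$ mesh-dependent norm. By contrast, the emergence of the precise factor $(\beta^{1/2}h_k^{-2}+1)^{1/2}\beta^{-1/2}h_k^2$ is automatic once the regularity bound of Lemma \ref{h2} and the interpolation rates are inserted.
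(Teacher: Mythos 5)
Your proposal is a valid duality argument and arrives at the right factor, but it executes the duality differently from the paper, and the difference matters for how much auxiliary machinery you need. The paper tests the $L_2$ identity against the \emph{discrete} dual solutions: it defines $(\xi_k,\theta_k)$ and $(\xi_{k-1},\theta_{k-1})$ by \eqref{eq:approxdvk}--\eqref{eq:approxdvkm1}, observes $(\xi_{k-1},\theta_{k-1})=\tilde P^{k-1}_k(\xi_k,\theta_k)$, and then telescopes using \eqref{eq:project} so that the whole quantity collapses to $\cB_k((p,y),(\xi_k,\theta_k)-I^k_{k-1}(\xi_{k-1},\theta_{k-1}))$. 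This buys two things you have to supply by hand: (i) the identity $\|\zeta\|^2_\LT+\|\mu\|^2_\LT=\cB_k((\zeta,\mu),(\xi_k,\theta_k))$ is immediate from the definition of $(\xi_k,\theta_k)$ with test function $(\zeta,\mu)$, so no consistency argument for the merely $H^2$ continuous dual solution is needed (whereas your route genuinely requires the Appendix~\ref{apdix:duality} computation, which does go through for a purely discrete error since it only uses integration by parts against piecewise polynomials); and (ii) the final factor $\trinorm{(p,y)}_{1,k}$ appears directly, with the mesh dependence carried entirely by $\|\xi_k-\xi_{k-1}\|_\HBh+\|\theta_k-\theta_{k-1}\|_\HBh$, which is controlled by two applications of Lemma~\ref{lem:derr}. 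Your route instead keeps $\cB_k(e,\cdot)$ and therefore needs the extra energy-stability estimate $\|(Id_k-I^k_{k-1}P^{k-1}_k)(p,y)\|_\HBh\lesssim\|p\|_\HBh+\|y\|_\HBh$. That estimate is true, but your justification of the injection stability only addresses the jump terms of $\|\cdot\|_{1,h}$; the norm \eqref{eq:hbhnorm1} also contains $\sum_e h_e\|\{\mathbf{n}\cdot\nabla v\}\|^2_{L_2(e)}$ over the \emph{fine} edges interior to coarse elements, which must be handled by a trace/inverse estimate on the fine elements --- not difficult, but a real step you have not written down, and one the paper's telescoping avoids entirely. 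With those two items filled in, your argument is correct; the paper's version is simply more economical because it never leaves the discrete setting except through Lemma~\ref{lem:derr}.
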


\begin{proof}
We will only prove \eqref{eq:approx}, the argument for \eqref{eq:approxdual} is similar.
Let $(p,y)\in V_k \times V_k$ be arbitrary and $(\zeta,\mu)=(Id_k-I^k_{k-1}P^{k-1}_k)(p,y)$. By (\ref{eq:l2normeq}), it suffices to show that
\begin{equation}
\|\zeta\|_{\LT}+\|\mu\|_{\LT}\lesssim ({\beta}^{\frac{1}{2}}h_k^{-2}+1)^{\frac{1}{2}}\beta^{-\frac{1}{2}}h_k^2\trinorm{(p,y)}_{1,k}.
\end{equation}
The estimate \eqref{eq:approx} is established through a duality argument (cf. \cite{brenner2005convergence}). Let $(\xi,\theta)\in H^1_0(\Omega)\times H^1_0(\Omega)$ satisfies
\begin{equation}\label{eq:approxduality}
\mathcal{B}((q,z),(\xi,\theta))=(\zeta,q)_\LT+(\mu,z)_\LT \quad \forall (q,z)\in \ES.
\end{equation}

Moreover, we define $(\xi_{k},\theta_{k})\in \FK$ and $(\xi_{k-1},\theta_{k-1})\in V_{k-1}\times V_{k-1}$ by
\begin{align}
\mathcal{B}_{k}((q,z),(\xi_{k},\theta_{k})&=(\zeta,q)_\LT+(\mu,z)_\LT \quad \forall (q,z)\in V_{k}\times V_{k},\label{eq:approxdvk}\\
\mathcal{B}_{k-1}((q,z),(\xi_{k-1},\theta_{k-1}))&=(\zeta,q)_\LT+(\mu,z)_\LT \quad \forall (q,z)\in V_{k-1}\times V_{k-1}.\label{eq:approxdvkm1}
\end{align}
Note that \eqref{eq:approxdvk} and \eqref{eq:approxdvkm1} implies
\begin{equation}\label{eq:approxbbrela}
    \mathcal{B}_{k}((q,z),(\xi_{k},\theta_{k}))=\mathcal{B}_{k-1}((q,z),(\xi_{k-1},\theta_{k-1}))\quad\forall (q,z)\in V_{k-1}\times V_{k-1}.
\end{equation}
It follows from \eqref{eq:approxbbrela} and \eqref{eq:projectdual} that
\begin{equation}\label{eq:approxkkm1}
    (\xi_{k-1},\theta_{k-1})=\tilde{P}_k^{k-1}(\xi_{k},\theta_{k}).
\end{equation}

We have the following by \eqref{eq:hb1esti} and $h_k\approx\frac12h_{k-1}$,
\begin{equation}\label{eq:approxh1esti1}
    \begin{aligned}
        \|\xi-\xi_{k}\|_{\HBh}&+\|\theta-\theta_{k}\|_{\HBh}\\
        &\le C_{\Omega}({\beta}^{\frac{1}{2}}h_k^{-2}+1)^{\frac{1}{2}}\beta^{-\frac{1}{2}}h_k^2(\|\zeta\|_{\LT}+\|\mu\|_{\LT}),
    \end{aligned}
\end{equation}
\begin{equation}\label{eq:approxh1esti2}
    \begin{aligned}
        \|\xi-\xi_{k-1}\|_{\HBh}&+\|\theta-\theta_{k-1}\|_{\HBh}\\
        &\le C_{\Omega}({\beta}^{\frac{1}{2}}h_k^{-2}+1)^{\frac{1}{2}}\beta^{-\frac{1}{2}}h_k^2(\|\zeta\|_{\LT}+\|\mu\|_{\LT}).
    \end{aligned}
\end{equation}
Therefore by \eqref{eq:bhwposed}, \eqref{eq:project}, \eqref{eq:approxdvk}, \eqref{eq:approxkkm1}, \eqref{eq:approxh1esti1} and \eqref{eq:approxh1esti2} we have
\begin{align*}
\|\zeta\|^2_{\LT}+\|\mu\|^2_{\LT} =& \mathcal{B}_k((\zeta,\mu),(\xi_k,\theta_k))\\
=& \mathcal{B}_k((Id_k-I^k_{k-1}P^{k-1}_k)(p,y),(\xi_k,\theta_k))\\
=& \mathcal{B}_k((p,y),(\xi_k,\theta_k))-\cB_k(I^k_{k-1}P^{k-1}_k(p,y),(\xi_k,\theta_k))\\
=& \mathcal{B}_k((p,y),(\xi_k,\theta_k))-\cB_{k-1}(P^{k-1}_k(p,y), \tilde{P}^{k-1}_k(\xi_k,\theta_k))\\
=& \mathcal{B}_k((p,y),(\xi_k,\theta_k))-\cB_{k-1}(P^{k-1}_k(p,y), (\xi_{k-1},\theta_{k-1}))\\
=& \mathcal{B}_k((p,y),(\xi_k,\theta_k))-\cB_{k}((p,y),I^k_{k-1}(\xi_{k-1},\theta_{k-1}))\\
=& \mathcal{B}_k((p,y),(\xi_k,\theta_k)-I^k_{k-1}(\xi_{k-1},\theta_{k-1}))\\
\lesssim&(\|\xi_k-\xi_{k-1}\|^2_{\HBh}+\|\theta_k-\theta_{k-1}\|^2_{\HBh})^{\frac{1}{2}}\\
&\times(\|p\|^2_{\HBh}+\|y\|^2_{\HBh})^{\frac{1}{2}}\\
\lesssim& ({\beta}^{\frac{1}{2}}h_k^{-2}+1)^{\frac{1}{2}}\beta^{-\frac{1}{2}}h_k^2(\|\zeta\|_{\LT}+\|\mu\|_{\LT})\trinorm{(p,y)}_{1,k}.
\end{align*}
\end{proof}

\section{Convergence Analysis}\label{sec:convanalysis}

Let $E_k:\FK\rightarrow\FK$ be the error propagation operator for the $k$th level $W$-cycle algorithm for \eqref{eq:generalproblem}. The following recursive relationship is well-known (cf. \cite{Hackbusch,BS}),
\begin{equation}\label{eq:ek}
    E_k=R_k^{m_2}(Id_k-I^k_{k-1}P^{k-1}_k+I_{k-1}^kE_{k-1}^2P_k^{k-1})S_k^{m_1},
\end{equation}
where $R_k$ is defined in \eqref{eq:postop} and 
\begin{equation}
S_k=Id_k-\lambda_k\mathfrak{C}^{-1}_k\fB_k^t\fB_k
\end{equation}
measures the effect of one pre-smoothing step \eqref{eq:pre}.
\begin{remark}
We have the following adjoint relation
\begin{equation}\label{eq:skrkduality}
\mathcal{B}_k(S_k(p,y),(q,z))=\mathcal{B}_k((p,y),\tilde{R}_k(q,z)) \quad \forall (p,y),(q,z)\in \FK,
\end{equation}
where $\tilde{R}_k$ is defined in \eqref{eq:postopdual}. The relation \eqref{eq:skrkduality} is the reason why we consider the multigrid algorithms for \eqref{eq:generalproblem} and \eqref{eq:generalproblemdual} simultaneously.
\end{remark}

\begin{lemma}\label{lem:skeq}
    We have
    \begin{equation}
        \|(Id_k-I^k_{k-1}P^{k-1}_k)S^m_k\|\approx\|\tilde{R}^m_k(Id_k-I^k_{k-1}\tilde{P}^{k-1}_k)\|
    \end{equation}
    where $\|\cdot\|$ is the operator norm with respect to $\trinorm{\cdot}_{1,k}$.
\end{lemma}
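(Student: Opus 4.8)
The plan is to exploit two adjointness relations with respect to the (non-symmetric) bilinear form $\cB_k$, and then transfer the resulting identity for operator norms from the dual norm $\trinorm{\cdot}^{\widetilde{ }}_{1,k}$ back to the primal norm $\trinorm{\cdot}_{1,k}$, using that the two are uniformly equivalent by \eqref{eq:1knormeq}.

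First I would record the adjoint structure of the coarse-grid correction. For $(a,b),(c,d)\in\FK$, applying the definition \eqref{eq:projectdual} of $\tilde{P}^{k-1}_k$ and then the definition \eqref{eq:project} of $P^{k-1}_k$ (with test function $\tilde{P}^{k-1}_k(c,d)\in\FKm$) gives
\begin{align*}
\cB_k(I^k_{k-1}P^{k-1}_k(a,b),(c,d)) &= \cB_{k-1}(P^{k-1}_k(a,b),\tilde{P}^{k-1}_k(c,d)) \\
&= \cB_k((a,b),I^k_{k-1}\tilde{P}^{k-1}_k(c,d)),
\end{align*}
so $I^k_{k-1}P^{k-1}_k$ and $I^k_{k-1}\tilde{P}^{k-1}_k$ are $\cB_k$-adjoints, and hence so are $Id_k-I^k_{k-1}P^{k-1}_k$ and $Id_k-I^k_{k-1}\tilde{P}^{k-1}_k$. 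Combining this with the adjoint relation \eqref{eq:skrkduality} between $S_k$ and $\tilde{R}_k$, and the fact that the $\cB_k$-adjoint of a product reverses the factors, I conclude that $(Id_k-I^k_{k-1}P^{k-1}_k)S^m_k$ and $\tilde{R}^m_k(Id_k-I^k_{k-1}\tilde{P}^{k-1}_k)$ are $\cB_k$-adjoints of one another.

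Next I would establish the general principle that for any operator $A$ on $\FK$ with $\cB_k$-adjoint $A^*$ one has $\|A\|\approx\|A^*\|^{\widetilde{ }}$, where $\|\cdot\|$ (resp. $\|\cdot\|^{\widetilde{ }}$) denotes the operator norm induced by $\trinorm{\cdot}_{1,k}$ (resp. $\trinorm{\cdot}^{\widetilde{ }}_{1,k}$). The key is to represent each energy norm as a supremum of $\cB_k$ over test functions: by \eqref{eq:1knormeq} and the discrete inf-sup estimate \eqref{eq:bhwposed} one has $\trinorm{A(p,y)}_{1,k}\approx\sup_{(q,z)}\cB_k(A(p,y),(q,z))/\trinorm{(q,z)}^{\widetilde{ }}_{1,k}$, while \eqref{eq:bhwposeddual} gives $\trinorm{w}^{\widetilde{ }}_{1,k}\approx\sup_{(p,y)}\cB_k((p,y),w)/\trinorm{(p,y)}_{1,k}$. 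Substituting the adjoint identity $\cB_k(A(p,y),(q,z))=\cB_k((p,y),A^*(q,z))$ and interchanging the two suprema converts the operator norm of $A$ into that of $A^*$ measured in the dual norm, yielding $\|A\|\approx\|A^*\|^{\widetilde{ }}$ with constants independent of $k$ and $\beta$.

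Finally I would apply this principle to $A=(Id_k-I^k_{k-1}P^{k-1}_k)S^m_k$, whose $\cB_k$-adjoint is $\tilde{R}^m_k(Id_k-I^k_{k-1}\tilde{P}^{k-1}_k)$, obtaining $\|(Id_k-I^k_{k-1}P^{k-1}_k)S^m_k\|\approx\|\tilde{R}^m_k(Id_k-I^k_{k-1}\tilde{P}^{k-1}_k)\|^{\widetilde{ }}$; since $\trinorm{\cdot}_{1,k}$ and $\trinorm{\cdot}^{\widetilde{ }}_{1,k}$ are uniformly equivalent by \eqref{eq:1knormeq}, the induced operator norms $\|\cdot\|$ and $\|\cdot\|^{\widetilde{ }}$ are interchangeable, which removes the tilde and gives the claim. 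The main obstacle is the middle step: carefully justifying the double-supremum manipulation and tracking that every $\approx$ constant arising from the inf-sup estimates \eqref{eq:bhwposed}--\eqref{eq:bhwposeddual} and the norm equivalence \eqref{eq:1knormeq} remains independent of $k$ and $\beta$, so that the compounded equivalence constant in the final bound is uniform.
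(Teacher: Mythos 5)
Your proposal is correct and follows essentially the same route as the paper: express the energy norm of the image via the inf-sup characterization \eqref{eq:bhwposed}, move the operator onto the test function using the $\cB_k$-adjointness of $S_k$ with $\tilde{R}_k$ from \eqref{eq:skrkduality} and of $Id_k-I^k_{k-1}P^{k-1}_k$ with $Id_k-I^k_{k-1}\tilde{P}^{k-1}_k$, and close with continuity and the norm equivalence \eqref{eq:1knormeq}. The only cosmetic difference is that you route the argument through the dual norm $\trinorm{\cdot}^{\widetilde{\ }}_{1,k}$ and make the adjointness of the coarse-grid projections explicit, whereas the paper works directly in $\trinorm{\cdot}_{1,k}$ and leaves that adjointness implicit.
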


\begin{proof}
For all $(p,y)\in \FK$, it follows from $\eqref{eq:bhwposed}$ that
\begin{equation}
    \begin{aligned}
            \trinorm{(Id_k-&I^k_{k-1}P^{k-1}_k)S^m_k(p,y)}_{1,k}\\
            &\approx\sup_{(q,z)\in \FK} \frac{\mathcal{B}_k((Id_k-I^k_{k-1}P^{k-1}_k)S^m_k(p,y),(q,z))}{\trinorm{(q,z)}_{1,k}}\\
            &=\sup_{(q,z)\in \FK} \frac{\mathcal{B}_k((p,y),\tilde{R}_k^m(Id_k-I^k_{k-1}\tilde{P}^{k-1}_k)(q,z))}{\trinorm{(q,z)}_{1,k}}\\
            &\lesssim\trinorm{(p,y)}_{1,k}\|R^m_k(Id_k-I^k_{k-1}\tilde{P}^{k-1}_k)\|.
    \end{aligned}
\end{equation}
This implies $\|(Id_k-I^k_{k-1}P^{k-1}_k)S^m_k\|\lesssim\|\tilde{R}^m_k(Id_k-I^k_{k-1}\tilde{P}^{k-1}_k)\|$. The other direction of the estimate is similar.
\end{proof}

\subsection{Convergence of the two-grid algorithm.}

In the two-grid algorithm the coarse grid residual equation is solved exactly ($E_{k-1}=0$ in \eqref{eq:ek}). We therefore obtain the error propagation operator of the two-grid algorithm $R_k^{m_2}(Id_k-I^k_{k-1}P^{k-1}_k)S_k^{m_1}$ with $m_1$ pre-smoothing steps and $m_2$ post-smoothing steps.

We have the following lemma for the convergence of the two-grid algorithm.

\begin{lemma}\label{lem:twogrid}
    In the case of $\beta^{-\frac{1}{2}}h_k^2<1$, we have
    \begin{equation}\label{eq:twogridconv1}
        \|R_k^{m_2}(Id_k-I^k_{k-1}P^{k-1}_k)S_k^{m_1}\|\lesssim \tau^{m_1+m_2}.
    \end{equation}
    In the case of $\beta^{-\frac{1}{2}}h_k^2\ge 1$, we have
    \begin{equation}\label{eq:twogridconv2}
        \|R_k^{m_2}(Id_k-I^k_{k-1}P^{k-1}_k)S_k^{m_1}\|\lesssim [\max(1,{m_2})\max(1,{m_1})]^{-1/2}.
    \end{equation}
\end{lemma}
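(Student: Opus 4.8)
The plan is to reduce the two-grid error propagation operator to a product of single-smoother factors and then estimate each factor by pairing the smoothing property (Lemma \ref{lem:postsm}) with the approximation property (Lemma \ref{lem:approx}), the two cases being governed by the conditioning of $\fB^t_k\fC_k^{-1}\fB_k$, i.e.\ by which branch of the damping rule \eqref{eq:lambdak1}--\eqref{eq:lambdak2} is active.

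First I would use that $Id_k-I^k_{k-1}P^{k-1}_k$ is idempotent (see \eqref{eq:IP1}) to insert a copy of it in the middle of the operator, so that by submultiplicativity of the $\trinorm{\cdot}_{1,k}$-operator norm
$$\|R_k^{m_2}(Id_k-I^k_{k-1}P^{k-1}_k)S_k^{m_1}\|\le \|R_k^{m_2}(Id_k-I^k_{k-1}P^{k-1}_k)\|\,\|(Id_k-I^k_{k-1}P^{k-1}_k)S_k^{m_1}\|.$$
Lemma \ref{lem:skeq} then rewrites the pre-smoothing factor as the dual post-smoothing quantity $\|\tilde R_k^{m_1}(Id_k-I^k_{k-1}\tilde P^{k-1}_k)\|$, whose treatment is the verbatim mirror, via \eqref{eq:bhwposeddual}, \eqref{eq:approxdual} and the dual half of Lemma \ref{lem:postsm}, of the primal one. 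Hence it suffices to bound, for a single exponent $m$, the factor $\|R_k^{m}(Id_k-I^k_{k-1}P^{k-1}_k)\|$, and then to multiply the $m_1$- and $m_2$-contributions.

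For this per-factor estimate, fix $(p,y)$ and set $(\zeta,\mu)=(Id_k-I^k_{k-1}P^{k-1}_k)(p,y)$; Lemma \ref{lem:approx} gives directly $\trinorm{(\zeta,\mu)}_{0,k}\lesssim(\beta^{\frac12}h_k^{-2}+1)^{\frac12}\beta^{-\frac12}h_k^2\,\trinorm{(p,y)}_{1,k}$. When $\fB^t_k\fC_k^{-1}\fB_k$ is well-conditioned (the branch \eqref{eq:lambdak2}, where $\lambda_{min}$ and $\lambda_{max}$ are comparable by Lemma \ref{lem:lambda} and Remark \ref{rem:bcbwellcon}), Lemma \ref{lem:postsm} is a genuine $\trinorm{\cdot}_{1,k}$-contraction, so $\trinorm{R_k^m(\zeta,\mu)}_{1,k}\le\tau^m\trinorm{(\zeta,\mu)}_{1,k}\lesssim\tau^m\trinorm{(p,y)}_{1,k}$ and multiplying the two factors yields $\tau^{m_1+m_2}$. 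In the ill-conditioned branch \eqref{eq:lambdak1}, Lemma \ref{lem:postsm} only supplies the algebraic estimate $\trinorm{R_k^m(\zeta,\mu)}_{1,k}\lesssim(\beta^{\frac12}h_k^{-2}+1)^{\frac12}m^{-\frac12}\trinorm{(\zeta,\mu)}_{0,k}$ (its $s=1$ instance).

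Chaining the last two inequalities produces the factor $(\beta^{\frac12}h_k^{-2}+1)\beta^{-\frac12}h_k^2\,m^{-\frac12}$, and the decisive point is that the smoothing amplification $(\beta^{\frac12}h_k^{-2}+1)^{\frac12}$ is cancelled exactly by the gain $(\beta^{\frac12}h_k^{-2}+1)^{\frac12}\beta^{-\frac12}h_k^2$ from the approximation property, their product $(\beta^{\frac12}h_k^{-2}+1)\beta^{-\frac12}h_k^2$ being bounded by an absolute constant on this branch. This collapses the factor to $\lesssim\max(1,m)^{-\frac12}$, and multiplying the pre- and post-smoothing contributions gives $[\max(1,m_1)\max(1,m_2)]^{-\frac12}$. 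I expect the ill-conditioned branch to be the main obstacle: one must track the $\beta^{\frac12}h_k^{-2}$-dependence of $\lambda_k$ and of $\lambda_{min},\lambda_{max}$ through Lemma \ref{lem:lambda} to confirm the cancellation is exact and that the exponents match so that no residual power of $\beta^{\frac12}h_k^{-2}+1$ survives after choosing $s=1$. A secondary subtlety is that, since $P^{k-1}_k$ is defined through the nonsymmetric saddle-point form $\cB_k$, the operator $Id_k-I^k_{k-1}P^{k-1}_k$ is not $\trinorm{\cdot}_{1,k}$-orthogonal, so the uniform boundedness used in the contraction case must be drawn from the inf--sup stability \eqref{eq:bhwposed} rather than from orthogonality.
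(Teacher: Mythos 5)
Your proof is correct and follows essentially the same route the paper intends (the paper's own ``proof'' is only a citation to Section~5.1 of the reference, listing exactly the ingredients you deploy: the idempotency \eqref{eq:IP1}--\eqref{eq:IP2} to split off the two single-smoother factors, Lemma \ref{lem:skeq} to dualize the pre-smoothing factor, and the pairing of Lemma \ref{lem:postsm} with Lemma \ref{lem:approx}, with the decisive cancellation $(\beta^{1/2}h_k^{-2}+1)\beta^{-1/2}h_k^2=1+\beta^{-1/2}h_k^2\le 2$ on the fine levels). One remark: you attach the bound $\tau^{m_1+m_2}$ to the well-conditioned regime $\beta^{1/2}h_k^{-2}<1$ (i.e.\ $\beta^{-1/2}h_k^2>1$), which is the reading consistent with Lemma \ref{lem:postsm}, the choice \eqref{eq:lambdak}, Theorem \ref{thm:wcycle} and Remark \ref{rem:wcycle}, whereas the lemma as printed labels that case ``$\beta^{-1/2}h_k^2<1$''; the inequalities in the statement appear to be inverted, so your assignment of the two cases is the intended one rather than an error on your part.
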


\begin{proof}
    The proof can be found in \cite[Section 5.1]{brenner2020multigrid}. The key ingredients are \eqref{eq:IP2}, Lemma \ref{lem:postsm}, Lemma \ref{lem:approx} and Lemma \ref{lem:skeq}.
\end{proof}

\subsection{Convergence of the $W$-Cycle Algorithm}

It is well-known that the convergence of the two-grid algorithm implies the convergence of the $W$-cycle algorithm by a standard perturbation argument (cf. \cite{Hackbusch,briggs2000multigrid,BS}). A delicate modification \cite{brenner2020multigrid} of the standard argument leads to the following theorem.

\begin{theorem}\label{thm:wcycle}
    There exists a positive integer $m_\ast$ independent of $k$ and $\beta$ such that
    \begin{alignat}{3}
        \|E_k\|&\le C_\sharp\tau^{m_1+m_2} &&\quad\forall 1\le k\le k_\ast,\label{eq:wcyclecoar}\\ 
        \|E_k\|&\le C_\flat[\max(1,{m_2})\max(1,{m_1})]^{-1/2}+4^{1-2^{k-k_\ast}}(C_\sharp\tau^{m_1+m_2})&&\quad\forall k\ge k_\ast+1.\label{eq:wcyclefine}
    \end{alignat}
 provided $[\max(1,{m_2})\max(1,{m_1})]\ge m_\ast$. Here $C_\sharp$ and $C_\flat$ are constants independent of $k$ and $\beta$ and the integer $k_\ast$ is the largest positive integer such that $\beta^\frac12h_k^{-2}<1$. 
\end{theorem}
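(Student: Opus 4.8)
The plan is to prove the two estimates separately, according to whether $k$ lies in the ``coarse'' regime $k \le k_\ast$ (where $\beta^{\frac12}h_k^{-2} < 1$, equivalently $\beta^{-\frac12}h_k^2 \ge 1$ is \emph{not} forced---note that $k_\ast$ is defined via $\beta^{\frac12}h_k^{-2}<1$) or the ``fine'' regime $k \ge k_\ast + 1$. The key observation is that the regime boundary $k_\ast$ is exactly where the two-grid convergence estimates of Lemma \ref{lem:twogrid} switch form, and where the smoothing properties of Lemma \ref{lem:postsm} switch between contraction and polynomial decay. First I would establish \eqref{eq:wcyclecoar} by induction on $k$ in the coarse regime. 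For $k=1$ the estimate reduces to the two-grid bound \eqref{eq:twogridconv1}, since $E_0 = 0$. For the inductive step one uses the recursion \eqref{eq:ek}, writing $E_k$ as the two-grid operator plus the perturbation term $R_k^{m_2} I_{k-1}^k E_{k-1}^2 P_k^{k-1} S_k^{m_1}$; the two-grid part is controlled by $\tau^{m_1+m_2}$ via \eqref{eq:twogridconv1}, and the perturbation term is bounded using $\|E_{k-1}\| \le C_\sharp \tau^{m_1+m_2}$ from the inductive hypothesis together with uniform bounds on the transfer operators $I_{k-1}^k$, $P_k^{k-1}$ and the smoothers in the $\trinorm{\cdot}_{1,k}$ norm. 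The requirement $[\max(1,m_2)\max(1,m_1)] \ge m_\ast$ guarantees $C_\sharp \tau^{m_1+m_2}$ is small enough that the quadratic term $\|E_{k-1}\|^2$ does not cause the constant to blow up across levels.

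For the fine regime \eqref{eq:wcyclefine} I would again argue inductively, but now the natural quantity to track is the contraction factor in a renormalized form. In this regime the two-grid estimate \eqref{eq:twogridconv2} gives a bound of order $[\max(1,m_2)\max(1,m_1)]^{-1/2}$, which is a genuine (small but not exponentially small) contraction. The delicate point, following the modification in \cite{brenner2020multigrid}, is that the coarse-grid error $E_{k-1}$ entering the recursion \eqref{eq:ek} may itself sit in the coarse regime (if $k = k_\ast + 1$) or the fine regime (if $k > k_\ast+1$), so the inductive hypothesis feeding the perturbation term has a different form at the transition level. The doubly-exponential factor $4^{1-2^{k-k_\ast}}$ appearing in \eqref{eq:wcyclefine} is precisely what arises from iterating the quadratic self-improvement $\|E_{k-1}\|^2$ in the $W$-cycle: each level squares the contribution of the residual $C_\sharp \tau^{m_1+m_2}$ carried up from the coarse levels, producing the exponent $2^{k-k_\ast}$, while the two-grid contraction contributes the additive $[\max(1,m_2)\max(1,m_1)]^{-1/2}$ term that persists at every level.

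The main obstacle I expect is controlling the perturbation term $R_k^{m_2} I_{k-1}^k E_{k-1}^2 P_k^{k-1} S_k^{m_1}$ in the $\trinorm{\cdot}_{1,k}$ operator norm uniformly in both $k$ and $\beta$. Since $E_{k-1}$ is measured in the $\trinorm{\cdot}_{1,k-1}$ norm on the coarse space while the product acts on the fine space, one must verify that the transfer operators $I_{k-1}^k$ and $P_k^{k-1}$ are bounded between the respective level-$1$ norms with constants independent of $\beta$; this boundedness ultimately rests on the norm equivalences \eqref{eq:1knormeq} and the definition \eqref{eq:project}. A secondary technical subtlety is that the smoothers $R_k$ and $S_k$ are only contractions in the coarse regime (Lemma \ref{lem:postsm}, first case); in the fine regime they merely satisfy the polynomial smoothing estimate, so the perturbation bound in \eqref{eq:wcyclefine} cannot rely on $R_k, S_k$ being contractions and must instead absorb them into the two-grid estimate via the adjoint relation \eqref{eq:skrkduality} and Lemma \ref{lem:skeq}. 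Since the full argument is a careful but standard adaptation of the perturbation analysis in \cite{brenner2020multigrid}, I would state the result and cite that reference for the bookkeeping, highlighting only where the DG framework and the $\beta$-robust norms require modification.
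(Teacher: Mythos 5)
Your proposal is correct and follows essentially the same route as the paper, which itself offers no detailed proof but invokes the standard two-grid-to-$W$-cycle perturbation argument with the "delicate modification" of \cite{brenner2020multigrid}; your identification of the recursion \eqref{eq:ek}, the two-grid bounds of Lemma \ref{lem:twogrid}, the uniform boundedness of the transfer operators and smoothers in the $\beta$-weighted energy norms, and the doubly-exponential bookkeeping behind the $4^{1-2^{k-k_\ast}}$ factor is exactly that argument. One small caution: since $k\le k_\ast$ means $\beta^{\frac12}h_k^{-2}<1$, i.e.\ $\beta^{-\frac12}h_k^2>1$, the case labels of Lemma \ref{lem:twogrid} as literally printed are inconsistent with Theorem \ref{thm:wcycle} (an apparent typo in the paper), and you correctly applied \eqref{eq:twogridconv1} in the coarse regime as the theorem requires, though your parenthetical remark that $\beta^{-\frac12}h_k^2\ge1$ "is not forced" there is backwards.
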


\begin{remark}\label{rem:wcycle}
The interpretations and implications of Theorem \ref{thm:wcycle} are as follows.
\begin{itemize}
    \item[1.] The $W$-cycle algorithm for the $k$th level problem \eqref{eq:generalproblem} is a contraction in the energy norm $\trinorm{\cdot}_{1,k}$ if the number of smoothing steps is large enough. The contraction number is bounded away from $1$ uniformly in $k$ and $\beta$. Therefore, the $W$-cycle algorithm is robust with respect to $k$ and $\beta$.
    \item[2.] At coarse levels (where $\beta^\frac12h_k^{-2}<1$), the estimate $\eqref{eq:wcyclecoar}$ indicates that the contraction numbers decrease exponentially with respect to the number of smoothing steps. The estimate $\eqref{eq:wcyclefine}$ implies that the contraction numbers will be dominated by the term $[\max(1,{m_2})\max(1,{m_1})]^{-1/2}$ at finer levels (where $\beta^\frac12h_k^{-2}\ge1$) eventually. 
\end{itemize} 
\end{remark}

\section{Numerical Results}\label{sec:numerics}

In this section, we report the numerical results of the symmetric $W$-cycle and $V$-cycle algorithms $(m_1=m_2=m)$. The preconditioner $\fC_k^{-1}$ is computed using a $V(4,4)$ multigrid solve for \eqref{eq:ckinv} based on a SIP discretization \cite{brenner2005convergence}. The eigenvalues $\lambda_{max}$ and $\lambda_{min}$ in \eqref{eq:lambdak2} are estimated using power iterations. We employed the MATLAB\textbackslash C$++$ toolbox FELICITY \cite{walker2018felicity} in our computation.

\begin{example}[Unit Square]\label{ex:unitsq}
    In this example we take $\O=(0,1)^2$ and $\sigma=6$ in \eqref{eq:dgbilinearsip}. For simplicity, we take $\bm{\zeta}=[1,0]^t$ and $\gamma=0$ in \eqref{eq:ardef}. See Figure \ref{fig:usquaretri} for the initial triangulation $\cT_0$ and the uniform refinements $\cT_1$ and $\cT_2$.
\end{example}

We report the contraction numbers of the $W$-cycle algorithm in Tables \ref{table:WCycle11}-\ref{table:WCycle13} for $\beta=10^{-2}$, $\beta=10^{-4}$ and $\beta=10^{-6}$. We observe that the contraction numbers of the symmetric $W$-cycle algorithm decay exponentially at coarse levels and then approach the standard $O(m^{-1})$ behavior at finer levels for all choices of $\beta$. Notice that our $W$-cycle algorithm is clearly robust with respect to $\beta$ and the performance agrees with Remark \ref{rem:wcycle}.

\begin{figure}[t]
    \centering
    \includegraphics[height=1.2in]{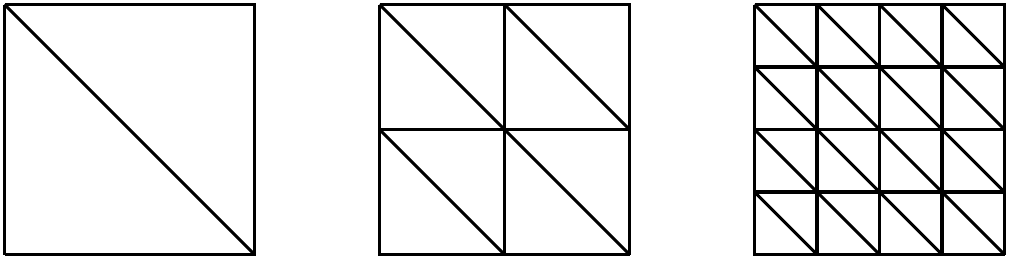}
    \caption{Triangulations $\cT_0$, $\cT_1$ and $\cT_2$ for the unit square in Example \ref{ex:unitsq}}\label{fig:usquaretri}
\end{figure}

\begin{table}[h]
\begin{tabular}{|c|c|c|c|c|c|c|c|}\hline
\backslashbox{$k$}{\lower 2pt\hbox{$m$}}&$2^0$&$2^1$&$2^2$&$2^3$&$2^4$&$2^5$&$2^6$\\
\hline
\rule{0pt}{2.5ex}$1$&8.17e-01&6.87e-01&5.08e-01&2.94e-01&1.02e-01&1.25e-02&1.89e-04\\
\hline
\rule{0pt}{2.5ex}$2$&8.31e-01&7.02e-01&5.31e-01&3.29e-01&1.43e-01&4.19e-02&8.24e-03\\
\hline
\rule{0pt}{2.5ex}$3$&8.96e-01&8.08e-01&6.74e-01&4.84e-01&2.92e-01&1.31e-01&4.48e-02\\
\hline
\rule{0pt}{2.5ex}$4$&8.64e-01&7.55e-01&5.93e-01&4.05e-01&2.17e-01&1.01e-01&4.58e-02\\
\hline
\rule{0pt}{2.5ex}$5$&8.49e-01&7.36e-01&5.63e-01&3.71e-01&1.95e-01&9.95e-02&4.59e-02\\
\hline
\rule{0pt}{2.5ex}$6$&8.46e-01&7.35e-01&5.55e-01&3.61e-01&1.90e-01&9.50e-02&4.68e-02\\
\hline
\rule{0pt}{2.5ex}$7$&8.45e-01&7.34e-01&5.52e-01&3.57e-01&1.90e-01&9.54e-02&4.70e-02\\
\hline
\end{tabular}
\par\bigskip
 \caption{The contraction numbers of the
 $k$-th level ($k=1,\ldots,7$) symmetric $W$-cycle algorithm for Example \ref{ex:unitsq}
 with $\beta=10^{-2}$ and $m=2^0,\ldots, 2^6$.}
\label{table:WCycle11}
\end{table}

\begin{table}[h]
\begin{tabular}{|c|c|c|c|c|c|c|c|}\hline
\backslashbox{$k$}{\lower 2pt\hbox{$m$}}&$2^0$&$2^1$&$2^2$&$2^3$&$2^4$&$2^5$&$2^6$\\
\hline
\rule{0pt}{2.5ex}$1$&5.85e-01&3.74e-01&1.48e-01&2.42e-02&6.54e-04&4.81e-07&4.97e-14\\
\hline
\rule{0pt}{2.5ex}$2$&8.08e-01&6.71e-01&4.65e-01&2.28e-01&5.52e-02&3.34e-03&1.13e-05\\
\hline
\rule{0pt}{2.5ex}$3$&8.38e-01&7.26e-01&5.36e-01&3.31e-01&1.54e-01&3.87e-02&3.66e-03\\
\hline
\rule{0pt}{2.5ex}$4$&9.36e-01&8.78e-01&7.82e-01&6.32e-01&4.38e-01&2.49e-01&1.01e-01\\
\hline
\rule{0pt}{2.5ex}$5$&8.85e-01&7.92e-01&6.54e-01&4.85e-01&2.94e-01&1.47e-01&6.19e-02\\
\hline
\rule{0pt}{2.5ex}$6$&8.95e-01&7.46e-01&5.77e-01&3.87e-01&2.09e-01&1.06e-01&5.49e-02\\
\hline
\rule{0pt}{2.5ex}$7$&8.48e-01&7.37e-01&5.58e-01&3.63e-01&1.95e-01&9.75e-02&4.89e-02\\
\hline
\end{tabular}
\par\bigskip
 \caption{The contraction numbers of the
 $k$-th level ($k=1,\ldots,7$) symmetric $W$-cycle algorithm for Example \ref{ex:unitsq}
 with $\beta=10^{-4}$ and $m=2^0,\ldots, 2^6$.}
\label{table:WCycle12}
\end{table}

\begin{table}[h]
\begin{tabular}{|c|c|c|c|c|c|c|c|}\hline
\backslashbox{$k$}{\lower 2pt\hbox{$m$}}&$2^0$&$2^1$&$2^2$&$2^3$&$2^4$&$2^5$&$2^6$\\
\hline
\rule{0pt}{2.5ex}$1$&4.18e-01&1.75e-01&3.02e-02&9.45e-04&8.71e-07&1.07e-13&1.86e-16\\
\hline
\rule{0pt}{2.5ex}$2$&4.35e-01&1.90e-01&3.60e-02&1.24e-03&1.53e-06&4.60e-13&2.30e-16\\
\hline
\rule{0pt}{2.5ex}$3$&7.06e-01&5.10e-01&2.84e-01&8.65e-02&7.88e-03&7.14e-05&5.31e-07\\
\hline
\rule{0pt}{2.5ex}$4$&8.33e-01&7.10e-01&5.22e-01&2.78e-01&9.24e-02&1.13e-02&1.76e-04\\
\hline
\rule{0pt}{2.5ex}$5$&8.43e-01&7.30e-01&5.44e-01&3.50e-01&1.79e-01&5.68e-02&1.13e-02\\
\hline
\rule{0pt}{2.5ex}$6$&9.14e-01&8.41e-01&7.24e-01&5.45e-01&3.50e-01&1.78e-01&8.27e-02\\
\hline
\rule{0pt}{2.5ex}$7$&8.70e-01&7.67e-01&6.14e-01&4.38e-01&2.52e-01&1.24e-01&5.87e-02\\
\hline
\end{tabular}
\par\bigskip
 \caption{The contraction numbers of the
 $k$-th level ($k=1,\ldots,7$) symmetric $W$-cycle algorithm for Example \ref{ex:unitsq}
 with $\beta=10^{-6}$ and $m=2^0,\ldots, 2^6$.}
\label{table:WCycle13}
\end{table}

We have also tested the symmetric $V$-cycle algorithm for $k$th level problem \eqref{eq:generalproblem} and briefly report the results in Table \ref{table:VCycle1}. We observe that our $V$-cycle algorithm is also a contraction with slightly more numbers of smoothing steps ($m=4$) and the contraction numbers are robust with respect to $k$ and $\beta$.

\begin{table}[t]
\begin{tabular}{|c|c|c|c|c|c|c|c|}\hline
\backslashbox{$m$}{\lower 2pt\hbox{$k$}}&1&2&3&4&5&6&7\\
\hline
\multicolumn{8}{|c|}{\rule{0pt}{2.5ex}$\beta=10^{-2}$}\\[2pt]
\hline
\rule{0pt}{2.5ex}$2^2$&5.08e-01&5.55e-01&6.79e-01&6.54e-01&6.50e-01&6.23e-01&6.15e-01\\
\hline
\rule{0pt}{2.5ex}$2^3$&2.94e-01&3.45e-01&4.97e-01&4.68e-01&4.83e-01&4.54e-01&4.41e-01\\
\hline
\rule{0pt}{2.5ex}$2^4$&1.02e-01&1.52e-01&2.92e-01&2.87e-01&2.88e-01&2.80e-01&2.52e-01\\
\hline
\multicolumn{8}{|c|}{\rule{0pt}{2.5ex}$\beta=10^{-4}$}\\[2pt]
\hline
\rule{0pt}{2.5ex}$2^2$&1.49e-01&4.65e-01&5.40e-01&7.84e-01&7.31e-01&7.07e-01&7.05e-01\\
\hline
\rule{0pt}{2.5ex}$2^3$&2.42e-02&2.28e-01&3.30e-01&6.33e-01&5.82e-01&5.74e-01&5.66e-01\\
\hline
\rule{0pt}{2.5ex}$2^4$&6.53e-04&5.60e-02&1.53e-01&4.38e-01&3.82e-01&3.75e-01&3.77e-01\\
\hline
\multicolumn{8}{|c|}{\rule{0pt}{2.5ex}$\beta=10^{-6}$}\\[2pt]
\hline
\rule{0pt}{2.5ex}$2^2$&3.08e-02&3.60e-02&2.72e-01&5.18e-01&5.54e-01&7.23e-01&6.94e-01\\
\hline
\rule{0pt}{2.5ex}$2^3$&9.47e-04&1.26e-03&8.61e-02&2.83e-01&3.55e-01&5.47e-01&5.31e-01\\
\hline
\rule{0pt}{2.5ex}$2^4$&7.11e-07&1.70e-06&8.02e-03&9.24e-02&1.80e-01&3.53e-01&3.33e-01\\
\hline
\end{tabular}
\par\bigskip
 \caption{The contraction numbers of the
 $k$-th level ($k=1,\ldots,7$) symmetric $V$-cycle algorithm for Example \ref{ex:unitsq}
 with  $\beta=10^{-2},10^{-4},10^{-6}$ and $m=2^2,2^3,2^4$.}
\label{table:VCycle1}
\end{table}

\begin{example}[L-shaped Domain]\label{ex:lshaped}
    We also test our multigrid methods on nonconvex domains. In this example, we take $\O=(0,1)^2\setminus(0.5,1)^2$ and $\sigma=6$ in \eqref{eq:dgbilinearsip}. We also take $\bm{\zeta}=[1,0]^t$ and $\gamma=0$ in \eqref{eq:ardef}. See Figure \ref{fig:lshaped} for the initial triangulation $\cT_0$ and the uniform refinement $\cT_1$.
\end{example}

We report the contraction numbers of the $W$-cycle algorithm in Tables \ref{table:WCyclel1}-\ref{table:WCyclel3} for $\beta=10^{-2}$, $\beta=10^{-4}$ and $\beta=10^{-6}$. We observe that the contraction numbers of the symmetric $W$-cycle algorithm decay exponentially at coarse levels and then approach the standard $O(m^{-\frac23})$ behavior for L-shaped domains at finer levels for all choices of $\beta$. Notice that our $W$-cycle algorithm is clearly robust with respect to $\beta$ and the performance agrees with Remark \ref{rem:wcycle}.

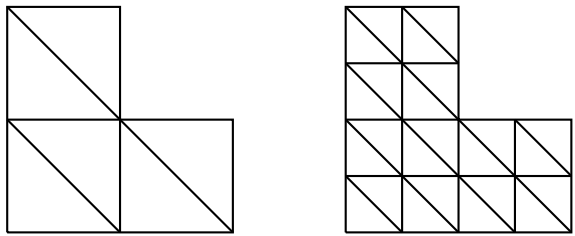
\begin{figure}
\centering
\begin{tikzpicture}
\draw[thick] plot coordinates {(0,0) (0,3) (1.5,3) (1.5,1.5) (3,1.5) (3,0) (0,0)};
\draw[thick] plot coordinates {(0,3)(3,0)};
\draw[thick] plot coordinates {(0,1.5)(1.5,1.5)};
\draw[thick] plot coordinates {(1.5,1.5) (1.5,0)};
\draw[thick] plot coordinates {(0,1.5) (1.5, 0)};

\draw[thick] plot coordinates {(4.5,0) (4.5,3) (6,3) (6,1.5) (7.5,1.5) (7.5,0) (4.5,0)};
\draw[thick] plot coordinates {(4.5,3)(7.5,0)};
\draw[thick] plot coordinates {(4.5,1.5)(6,1.5)};
\draw[thick] plot coordinates {(6,1.5) (6,0)};
\draw[thick] plot coordinates {(4.5,1.5) (6, 0)};

\draw[thick] plot coordinates {(4.5,0.75) (7.5, 0.75)};
\draw[thick] plot coordinates {(4.5,2.25) (6, 2.25)};
\draw[thick] plot coordinates {(4.5,2.25) (6.75, 0)};
\draw[thick] plot coordinates {(4.5,0.75) (5.25, 0)};
\draw[thick] plot coordinates {(5.25,3) (6, 2.25)};
\draw[thick] plot coordinates {(6.75,1.5) (7.5, 0.75)};

\draw[thick] plot coordinates {(5.25,3) (5.25, 0)};
\draw[thick] plot coordinates {(6.75,1.5) (6.75, 0)};
\end{tikzpicture}
\caption{Triangulations $\cT_0$ and $\cT_1$ for the L-shaped domain in Example \ref{ex:lshaped}} \label{fig:lshaped}
\end{figure}

\begin{table}[h]
\begin{tabular}{|c|c|c|c|c|c|c|c|}\hline
\backslashbox{$k$}{\lower 2pt\hbox{$m$}}&$2^0$&$2^1$&$2^2$&$2^3$&$2^4$&$2^5$&$2^6$\\
\hline
\rule{0pt}{2.5ex}$1$&8.34e-01&7.18e-01&5.31e-01&3.25e-01&1.30e-01&2.83e-02&1.62e-03\\
\hline
\rule{0pt}{2.5ex}$2$&8.95e-01&8.09e-01&6.74e-01&4.84e-01&2.84e-01&1.22e-01&3.10e-02\\
\hline
\rule{0pt}{2.5ex}$3$&8.64e-01&7.55e-01&5.96e-01&4.02e-01&2.17e-01&9.76e-02&3.76e-02\\
\hline
\rule{0pt}{2.5ex}$4$&8.50e-01&7.35e-01&5.65e-01&3.72e-01&1.95e-01&9.48e-02&4.58e-02\\
\hline
\rule{0pt}{2.5ex}$5$&8.46e-01&7.33e-01&5.56e-01&3.61e-01&1.91e-01&9.53e-02&4.70e-02\\
\hline
\rule{0pt}{2.5ex}$6$&8.46e-01&7.34e-01&5.53e-01&3.59e-01&1.90e-01&9.52e-02&4.62e-02\\
\hline
\rule{0pt}{2.5ex}$7$&8.45e-01&7.34e-01&5.52e-01&3.55e-01&1.91e-01&9.50e-02&4.71e-02\\
\hline
\end{tabular}
\par\bigskip
 \caption{The contraction numbers of the
 $k$-th level ($k=1,\ldots,7$) symmetric $W$-cycle algorithm for Example \ref{ex:lshaped}
 with $\beta=10^{-2}$ and $m=2^0,\ldots, 2^6$.}
\label{table:WCyclel1}
\end{table}

\begin{table}[h]
\begin{tabular}{|c|c|c|c|c|c|c|c|}\hline
\backslashbox{$k$}{\lower 2pt\hbox{$m$}}&$2^0$&$2^1$&$2^2$&$2^3$&$2^4$&$2^5$&$2^6$\\
\hline
\rule{0pt}{2.5ex}$1$&8.02e-01&6.63e-01&4.67e-01&2.26e-01&5.76e-02&3.57e-03&1.55e-05\\
\hline
\rule{0pt}{2.5ex}$2$&8.37e-01&7.20e-01&5.33e-01&3.26e-01&1.52e-01&3.67e-02&2.71e-03\\
\hline
\rule{0pt}{2.5ex}$3$&9.36e-01&8.78e-01&7.82e-01&6.29e-01&4.38e-01&2.50e-01&9.60e-02\\
\hline
\rule{0pt}{2.5ex}$4$&8.85e-01&7.90e-01&6.52e-01&4.96e-01&2.91e-01&1.44e-01&6.13e-02\\
\hline
\rule{0pt}{2.5ex}$5$&8.56e-01&7.47e-01&5.77e-01&3.87e-01&2.10e-01&1.06e-01&5.59e-02\\
\hline
\rule{0pt}{2.5ex}$6$&8.48e-01&7.37e-01&5.58e-01&3.64e-01&1.95e-01&9.73e-02&4.83e-02\\
\hline
\rule{0pt}{2.5ex}$7$&8.46e-01&7.35e-01&5.53e-01&3.56e-01&1.92e-01&9.57e-02&4.73e-02\\
\hline
\end{tabular}
\par\bigskip
 \caption{The contraction numbers of the
 $k$-th level ($k=1,\ldots,7$) symmetric $W$-cycle algorithm for Example \ref{ex:lshaped}
 with $\beta=10^{-4}$ and $m=2^0,\ldots, 2^6$.}
\label{table:WCyclel2}
\end{table}

\begin{table}[h]
\begin{tabular}{|c|c|c|c|c|c|c|c|}\hline
\backslashbox{$k$}{\lower 2pt\hbox{$m$}}&$2^0$&$2^1$&$2^2$&$2^3$&$2^4$&$2^5$&$2^6$\\
\hline
\rule{0pt}{2.5ex}$1$&4.26e-01&1.88e-01&3.61e-02&1.23e-03&1.18e-06&2.27e-13&2.26e-16\\
\hline
\rule{0pt}{2.5ex}$2$&7.07e-01&5.24e-01&2.84e-01&8.75e-02&8.40e-03&8.03e-05&3.31e-07\\
\hline
\rule{0pt}{2.5ex}$3$&8.30e-01&7.04e-01&5.15e-01&2.81e-01&9.01e-02&1.06e-02&1.53e-04\\
\hline
\rule{0pt}{2.5ex}$4$&8.42e-01&7.33e-01&5.43e-01&3.49e-01&1.78e-01&5.61e-02&1.28e-02\\
\hline
\rule{0pt}{2.5ex}$5$&9.14e-01&8.41e-01&7.23e-01&5.45e-01&3.52e-01&1.78e-01&8.26e-02\\
\hline
\rule{0pt}{2.5ex}$6$&8.70e-01&7.67e-01&6.14e-01&4.37e-01&2.50e-01&1.24e-01&5.85e-02\\
\hline
\rule{0pt}{2.5ex}$7$&8.52e-01&7.42e-01&5.66e-01&3.73e-01&2.03e-01&1.03e-01&5.28e-02\\
\hline

\end{tabular}
\par\bigskip
 \caption{The contraction numbers of the
 $k$-th level ($k=1,\ldots,7$) symmetric $W$-cycle algorithm for Example \ref{ex:lshaped}
 with $\beta=10^{-6}$ and $m=2^0,\ldots, 2^6$.}
\label{table:WCyclel3}
\end{table}

\section{Concluding Remark}\label{sec:conclude}

We proposed and analyzed multigrid methods for an elliptic optimal control problem based on DG discretizations. We proved that for a sufficiently large number of smoothing steps the $W$-cycle algorithm is uniformly convergent with respect to mesh refinements and a regularizing parameter. The numerical results coincide with the theoretical findings.

A more interesting problem is to consider an advection-dominated state equation. DG methods are promising for advection-dominated problem due to the natural built-in upwind stabilization and the weak treatment of the boundary conditions. Related work can be found, for example, in \cite{leykekhman2012local}. However, the challenge for extending our result is to design proper preconditioner so that the multigrid methods are robust for the advection-dominated case. This is under investigation in our ongoing projects.

\appendix  

\section{Proofs of \eqref{eq:ahcont} and \eqref{eq:ahcoer}}\label{apdix:pfrh}
For $T\in\mathcal{T}_h$ and $v\in H^{1+s}(\O)$ where $s\in(\frac12,1]$, the following trace inequalities with scaling is standard (cf. \cite[Lemma 7.2]{ern2017finite} and \cite[Proposition 3.1]{ciarlet2013analysis}),
    \begin{align}
         \|v\|_{L_2(\partial T)}&\le C(h_T^{-\frac12}\|v\|_{L_2(T)}+h_T^{s-\frac12}|v|_{H^s(T)}).\label{eq:traceinq}
    \end{align}
The following discrete Poinca{\'r}e inequality for DG functions \cite{brenner2003poincare,ayuso2009discontinuous,chen2004pointwise} is valid for all $v\in V_h$,
\begin{equation}\label{eq:dgpoin}
    \|v\|^2_\LT\le C\left(\sum_{T\in\O}\|\nabla v\|^2_{L_2(T)}+\sum_{e\in\partial T}\frac{1}{h_e}\|[v]\|^2_{L_2(e)}\right).
\end{equation}
\begin{proof}
    It is well-known that \cite{arnold2002unified,riviere2008discontinuous,BS}
        \begin{alignat}{3}
            \asiph(w,v)&\le C\|w\|_{1,h}\|v\|_{1,h}\quad&&\forall w,v\in V+V_h,\\
            \asiph(v,v)&\ge C\|v\|^2_{1,h}\quad&&\forall v\in V_h.\label{eq:asipcoer}
        \end{alignat}
    For the advection-reaction term, we have, for all $w,v\in V+V_h$,
    \begin{equation*}\label{eq:aarhesti}
        \begin{aligned}
            \aarh(w,v)&=\sum_{T\in\mathcal{T}_h}(\bz\cdot\nabla w+\gamma w, v)_T-\sum_{e\in\cE^i_h\cup\cE^{b,-}_h}(\bn\cdot\bz[w],\{v\})_e\\
            &\lesssim\left(\sum_{T\in\mathcal{T}_h}\|\nabla w\|^2_{L_2(T)}\right)^\frac12\|v\|_\LT+\|w\|_\LT\|v\|_\LT\\
            &\hspace{0.3cm}+\left(\sum_{e\in\cE^i_h\cup\cE^{b,-}_h}\frac{\sigma}{h_e}\|[w]\|^2_{L_2(e)}\right)^\frac12\left(\sum_{e\in\cE^i_h\cup\cE^{b,-}_h}\frac{h_e}{\sigma}\|\{v\}\|^2_{L_2(e)}\right)^\frac12\\
            &\lesssim\trinorm{w}_{h}\trinorm{v}_{h},
        \end{aligned}
    \end{equation*}
    where we use $\bz\in [W^{1,\infty}(\Omega)]^2$, $\gamma\in L_{\infty}(\Omega)$, \eqref{eq:traceinq} and \eqref{eq:dgpoin}.
    Furthermore, upon integration by parts, we have, for all $v\in V_h$,
    \begin{align*}
        \aarh(v,v)&=\sum_{T\in\mathcal{T}_h}(\bz\cdot\nabla v+\gamma v, v)_T-\sum_{e\in\cE^i_h\cup\cE^{b,-}_h}(\bn\cdot\bz[v],\{v\})_e\\
        &=\sum_{T\in\mathcal{T}_h}((\gamma-\frac12\nabla\cdot\bz)v,v)_T+\sum_{T\in\mathcal{T}_h}\int_{\partial T}\frac12(\bz\cdot\bn)v^2\ \!ds\\
        &\hspace{0.5cm}-\sum_{e\in\cE^i_h\cup\cE^{b,-}_h}\int_e\bz\cdot\bn[v]\{v\}\ \!ds\\
        &=\sum_{T\in\mathcal{T}_h}((\gamma-\frac12\nabla\cdot\bz)v,v)_T+\int_{\partial\O}\frac12|\bz\cdot\bn|v^2\ \!ds.
    \end{align*}
    By the assumption \eqref{eq:advassump}, we immediately have $\aarh(v,v)\ge0$. This finishes the proof.
    \end{proof}

\section{A Proof of \eqref{eq:dualityidentity}}\label{apdix:duality}

\begin{proof}
It follows from \eqref{eq:duality} that
\begin{equation}\label{eq:duall2eq}
    \begin{aligned}
        \|p&-p_h\|^2_{\LT}+\|y-y_h\|^2_{\LT}\\
        &=(\beta^{\frac{1}{2}}(-\Delta\xi+\bz\cdot\nabla\xi+\gamma\xi)-\theta,p-p_h)_\LT\\
        &\quad+(-\xi+\beta^{\frac{1}{2}}(\Delta\theta+\bz\cdot\nabla\theta-(\gamma-\nabla\cdot\bz)\theta),y-y_h)_\LT\\
        &=\beta^\frac12(-\Delta\xi,p-p_h)_\LT+\beta^\frac12\sum_{T\in\cT_h}(\bz\cdot\nabla\xi+\gamma\xi,p-p_h)_T\\
        &\quad-(\theta,p-p_h)_\LT-(\xi,y-y_h)_\LT\\
        &\quad+\beta^\frac12(\Delta\theta,y-y_h)_\LT+\beta^\frac12\sum_{T\in\cT_h}(\bz\cdot\nabla\theta-(\gamma-\nabla\cdot\bz)\theta,y-y_h)_T.
    \end{aligned}
\end{equation}
By the consistency of the SIP method (cf. \cite{riviere2008discontinuous,arnold2002unified}), we have
\begin{equation}\label{eq:sipdualconsis}
    (-\Delta\xi,p-p_h)=a_h^{sip}(\xi,p-p_h)\quad\text{and}\quad(\Delta\theta,y-y_h)=-a_h^{sip}(y-y_h,\theta).
\end{equation}
For the last term in \eqref{eq:duall2eq}, it follows from integration by parts that
\begin{equation}\label{eq:ardis1}
    \begin{aligned}
        &\sum_{T\in\cT_h}(\bz\cdot\nabla\theta-(\gamma-\nabla\cdot\bz)\theta,y-y_h)_T\\
        =&\sum_{T\in\cT_h}(-\bz\cdot\nabla(y-y_h),\theta)_T-(\gamma(y-y_h),\theta)_T\\
        &\quad+\sum_{T\in\cT_h}\int_{\partial T}(\bz\cdot\bn)(y-y_h)\theta\ \!ds.
    \end{aligned}
\end{equation}
The last term in \eqref{eq:ardis1} can be rewritten as the following \cite{arnold2002unified,di2011mathematical},
\begin{equation}\label{eq:ardis2}
    \begin{aligned}
        &\sum_{T\in\cT_h}\int_{\partial T}(\bz\cdot\bn)(y-y_h)\theta\ \!ds\\
        =&\sum_{e\in\cE_h^i}\int_e\bz\cdot\bn [(y-y_h)\theta]\ \!ds+\sum_{e\in\cE_h^b}\int_e\bz\cdot\bn(y-y_h)\theta\ \!ds\\
        =&\sum_{e\in\cE_h^i}\int_e\bz\cdot\bn [y-y_h]\{\theta\}\ \!ds+\sum_{e\in\cE_h^i}\int_e\bz\cdot\bn \{y-y_h\}[\theta]\ \!ds\\
        &+\sum_{e\in\cE_h^b}\int_e\bz\cdot\bn(y-y_h)\theta\ \!ds.
    \end{aligned}
\end{equation}
It then follows from $[\theta]=0$ on interior edges and $\theta=0$ on $\partial\O$ that
\begin{equation}\label{eq:ardis3}
    \sum_{T\in\cT_h}\int_{\partial T}(\bz\cdot\bn)(y-y_h)\theta\ \!ds=\sum_{e\in\cE_h^i\cup\cE_h^{b,-}}\int_e\bz\cdot\bn [y-y_h]\{\theta\}\ \!ds.
\end{equation}
According to \eqref{eq:ardis1}-\eqref{eq:ardis3}, we conclude
\begin{equation}\label{eq:ardualconsis}
    \sum_{T\in\cT_h}(\bz\cdot\nabla\theta-(\gamma-\nabla\cdot\bz)\theta,y-y_h)_T=-a_h^{ar}(y-y_h,\theta).
\end{equation}
Similarly, we can show 
\begin{equation}\label{eq:ardualconsis1}
    \sum_{T\in\cT_h}(\bz\cdot\nabla\xi+\gamma\xi,p-p_h)_T=a_h^{ar}(\xi,p-p_h).
\end{equation}
Therefore, we obtain the following by \eqref{eq:sipdualconsis}, \eqref{eq:ardualconsis}, \eqref{eq:ardualconsis1}, \eqref{eq:ahdef} and \eqref{eq:dgbilinear},
\begin{equation}
    \begin{aligned}
        &\|p-p_h\|^2_{\LT}+\|y-y_h\|^2_{\LT}\\
        &\quad=\beta^\frac12a_h(\xi,p-p_h)-(\theta,p-p_h)_\LT-(\xi,y-y_h)_\LT-\beta^\frac12a_h(y-y_h,\theta)\\
        &\quad=\cB_h((p-p_h,y-y_h),(\xi,\theta)).
    \end{aligned}
\end{equation}
\end{proof}

\section*{Acknowledgement}
The author would like to thank Prof. Susanne C. Brenner and Prof. Li-Yeng Sung for the helpful discussion regarding this project.

\bibliographystyle{plain}
\bibliography{references}

\end{document}